\documentclass[12pt] {article}
\usepackage{amsmath,amsthm}
\usepackage{amssymb,latexsym}

\usepackage{comment}
\usepackage{empheq}
\usepackage{amscd}
\usepackage{chngcntr}
\usepackage{enumitem}
\usepackage{tikz-cd}
\usepackage{bm}

\newlist{paragraphlist}{enumerate}{1}

\setlist[paragraphlist,1]{leftmargin=*,label={\bfseries \arabic*}}

\counterwithin{paragraphlisti}{subsubsection}

\title{Towards Hodge-Riemann relations for non-Archimedean analogs of valuations on convex sets.}
\date{}
\author{Semyon Alesker
\\  { \normalsize Department of Mathematics, Tel Aviv University, Ramat Aviv}
\\  { \normalsize 69978 Tel Aviv, Israel }
\\ {\normalsize e-mail: semyon@tauex.tau.ac.il }}

\def\RR{\mathbb{R}}
\def\CC{\mathbb{C}}
\def\QQ{\mathbb{Q}}

\def\ZZ{\mathbb{Z}}

\def\PP{\mathbb{P}}

\def\FF{\mathbb{F}}
\def\SS{\mathbb{S}}

\def\alp{\alpha}
\def\ome{\omega}

\def\lam{\lambda}
\def\Lam{\Lambda}

\def\to{\longrightarrow}

\def\qed { Q.E.D. }

\def\inj{\hookrightarrow}

\swapnumbers
\newtheorem{theorem}{Theorem}[section]

\newtheorem{lemma}[theorem]{Lemma}
\newtheorem{proposition}[theorem]{Proposition}
\newtheorem{claim}[theorem]{Claim}
\theoremstyle{definition}

\newtheorem{definition}[theorem]{Definition}
\newtheorem{remark}[theorem]{Remark}
\theoremstyle{conjecture}
\newtheorem{conjecture}[theorem]{Conjecture}
\theoremstyle{principle}

\def\ca{{\cal A}}  
\def\cd{{\cal D}}  
  
 \def\ck{{\cal K}} \def\cl{{\cal L}}
  \def\co{{\cal O}}
\def\cp{{\cal P}} \def\cq{{\cal Q}}

\def\bbx{\textbf{x}}
\def\bby{\textbf{y}}
\def\bbz{\textbf{z}}
\def\bbe{\textbf{e}}
\def\bbf{\textbf{f}}
\def\bbv{\textbf{v}}
\def\bbw{\textbf{w}}

\numberwithin{equation}{section}
\begin{document}
\maketitle

\begin{abstract}
In \cite{alesker-bernstein}, a non-Archimedean analogue of the space of translation-invariant even valuations on convex sets was introduced. In \cite{alesker-non-arch}, motivated by a further analogy with the classical theory, this space was equipped with two multiplicative structures, the product and the convolution. Both structures satisfy Poincar\'e duality and the (non-mixed) hard Lefschetz theorem.

In this paper, we formulate a conjecture concerning a more general mixed versions of the hard Lefschetz theorem and the Hodge--Riemann relations. We prove the non-mixed Hodge--Riemann relations in degree $1$ for the product and, equivalently, in codegree $1$ for the convolution.
\end{abstract}

\tableofcontents

\section{Introduction}\label{S:intro}
\begin{paragraphlist}

\item A K\"ahler package is an algebra over the real or complex numbers with a finite (bi)grading that satisfies Poincar\'e duality, the hard Lefschetz theorem, and the Hodge--Riemann relations.

We state below a general definition of a K\"ahler package in the commutative graded setting. Although this definition does not encompass many examples arising in algebraic geometry, it is sufficient for our purposes.

K\"ahler packages are typically rare, and proving their existence is often highly non-trivial.

Originally, examples of K\"ahler packages were provided by the cohomology algebras of compact K\"ahler (for example, complex projective) manifolds; see, for example, \cite{griffiths-harris}.

Later, examples of a different nature were discovered in combinatorics and convex geometry, in particular in the theory of valuations on convex sets.

 Relevant work in combinatorics includes \cite{stanley, mcmullen-89, mcmullen-93, karu-2004, adiprasito-huh-katz, huh-icm}.
 In some of these works, long-standing conjectures in combinatorics were solved by constructing a K\"ahler package or some of its constituent structures.

The K\"ahler packages constructed in the theory of valuations on convex sets strongly influenced the present work. Several constructions used in this paper have direct analogues in that theory.
However, from a technical point of view, the present paper is independent of that theory, and no prior knowledge of it is required. Relevant analogies from the theory of valuations on convex sets will be mentioned below as needed.

The new results of this work are Conjecture \ref{E:conject}, which formulates the existence of the K\"ahler package in a new setting, and Theorem \ref{T:thm}, which establishes the Hodge--Riemann relations in a special case of this conjecture.
Below, we review the relevant background.

\item Let $$A=A_0\oplus A_1\oplus \dots\oplus A_n$$ be a commutative graded algebra over $\RR$, i.e.,
\begin{eqnarray}\label{E:garded-alg}
A_i\cdot A_j\subset A_{i+j}.
\end{eqnarray}
Assume that $A_0=\RR$ and $\dim A_n=1$. We also assume that $A_n$ is oriented, so that the notion of a positive element of $A_n$ is well defined.

\item One says that $A$ satisfies Poincar\'e duality if the bilinear pairing
$$A_i\times A_{n-i}\to A_n$$
given by multiplication is perfect. That is, for every nonzero element $x\in A_i$, there exists $y\in A_{n-i}$ such that $x\cdot y\ne 0$.
\begin{remark}
If $A$ is finite dimensional, Poincar\'e duality implies that
$$\dim A_i=\dim A_{n-i}.$$
\end{remark}

\item One says that $A$ satisfies the mixed hard Lefschetz property if there exists a subset $\ck\subset A_1$ such that, for every $i<n/2$ and every choice of
$\ome_1,\dots,\ome_{n-2i}\in\ck$, the map $A_i\to A_{n-i}$ given by
$$x\mapsto x\cdot \ome_1\cdot\dots\cdot \ome_{n-2i}$$
is an isomorphism.

If this property holds only in the special case $\ome_1=\dots=\ome_{n-2i}$, then one says that $A$ satisfies the non-mixed hard Lefschetz property.

\begin{remark}
If $A$ is finite-dimensional, then the hard Lefschetz property, whether mixed or non-mixed, implies that
$$1=\dim A_0\leq \dim A_1\leq \dots \leq \dim A_{[n/2]}.$$
Applications of the hard Lefschetz property to combinatorics often proceed along these lines; see, for example, \cite{stanley}.
\end{remark}

\item One says that $A$ satisfies the mixed Hodge--Riemann relations if the following condition holds. Let $i\leq n/2$, and let $\ome_1,\dots,\ome_{n-2i+1}\in \ck$. Define the subspace $P_i\subset A_i$ of primitive elements by
\begin{eqnarray}\label{E:primitive}
P_i=\{x\in A_i|\,  x\cdot \ome_1\cdot\dots\cdot \ome_{n-2i}\ome_{n-2i+1}=0\}.
\end{eqnarray}
Consider the quadratic form on $A_i$ with values in $A_n$:
\begin{eqnarray}\label{E:form-Q}
Q(x)=(-1)^ix^2\cdot \ome_1\cdot\dots\cdot \ome_{n-2i}.
\end{eqnarray}
One says that $A$ satisfies the mixed Hodge--Riemann relations if the restriction of $Q$ to $P_i$ is positive definite; that is,
\begin{eqnarray}\label{E:HR-gen}
Q(x)>0 \mbox{ for any } x\in P_i\backslash\{0\}.
\end{eqnarray}

One says that $A$ satisfies the non-mixed Hodge--Riemann relations if the above property holds in the special case
$$\ome_1=\dots=\ome_{n-2i}=\ome_{n-2i+1}.$$

 In this paper, we will work with algebras over $\CC$ obtained by complexifying a real graded algebra. In this setting, the definition (\ref{E:form-Q}) of the quadratic form $Q$ should be replaced by
$$Q(x)=(-1)^ix\cdot \bar x \cdot \ome_1\cdot\dots\cdot \ome_{n-2i},$$
where $\bar x$ denotes the complex conjugate of $x$. This defines a Hermitian form.

\begin{remark}
\begin{enumerate}
\item The well-known Khovanskii--Teissier argument derives the Alexandrov--Fenchel inequality for mixed volumes of convex bodies from a special case of the non-mixed Hodge--Riemann relations for toric varieties; see, for example, \cite{burago-zalgaller}, \S 27.

\item The mixed Hodge--Riemann relations for valuations on convex sets also imply the Alexandrov--Fenchel inequality; see \cite{kotrbaty-conj}, Corollary 8.1. They further imply new inequalities for mixed volumes of convex bodies; see \cite{alesker-isr-ineq, kotrbaty-wannerer-AF}.

\item For applications of the non-mixed Hodge--Riemann relations to the combinatorics of matroids, see \cite{adiprasito-huh-katz}.
\end{enumerate}
\end{remark}

\item Let us provide some background specific to the present paper.
Let $\FF$ be a non-Archimedean local field (see Section \ref{S:local-fields}), and let $V$ be an $n$-dimensional vector space over $\FF$. In \cite{alesker-bernstein}, J. Bernstein and the author introduced a graded vector space
$$Val^\infty(V)=\oplus_{i=0}^n Val^\infty_i(V),$$
where $Val^\infty_0(V)$ and $Val^\infty_n(V)$ are one-dimensional, while all other summands are infinite-dimensional. In fact, $Val^\infty_0(V)=\CC$, and $Val^\infty_n(V)$ is the space of $\CC$-valued Haar measures on $V$, also denoted by $D(V)$.

The space $Val^\infty_i(V)$ is an irreducible representation of the group $GL(V)$ of invertible linear transformations of $V$.
It was introduced in \cite{alesker-bernstein} as the image of certain integral operators between spaces of sections of line bundles over appropriate Grassmannians of linear subspaces of $V$; see Section \ref{S:valuations}, paragraph \ref{i:def-valuations}. The space $Val^\infty(V)$ is intended to be an analogue of the space of translation-invariant even smooth valuations on convex sets.

In \cite{alesker-non-arch}, the author introduced two multiplicative structures: the product $\cdot$ on $Val^\infty(V)$ and the convolution $\ast$ on $Val^\infty(V)\otimes D(V)^*$. The construction of the product was motivated by the analogous product on translation-invariant smooth valuations on convex sets introduced by the author in \cite{alesker-04}, while the construction of the convolution was motivated by the analogous convolution on the corresponding space of valuations on convex sets introduced by Bernig and Fu in \cite{bernig-fu-convol}.

$(Val^\infty(V),\cdot)$ is a graded algebra in the sense of (\ref{E:garded-alg}).
The convolution satisfies an analogous property, with degree replaced by codegree.
Both the product and the convolution satisfy Poincar\'e duality \cite{alesker-non-arch}, just as in the case of valuations on convex sets \cite{alesker-04, bernig-fu-convol}.

The product and the convolution are related by a Fourier-type transform. More precisely, the author constructed in \cite{alesker-non-arch} an isomorphism of algebras
\begin{eqnarray}\label{E:fourier}
\FF\colon (Val^\infty(V),\cdot)\tilde\to (Val^\infty(V^\vee)\otimes D(V^\vee)^*,\ast),
\end{eqnarray}
where $V^\vee$ denotes the dual space of $V$. The isomorphism $\FF$ commutes with the action of $GL(V)$ on both spaces.
Consequently, any property of the K\"ahler package established for either the product or the convolution immediately implies the corresponding property for the other operation. For this reason, we will continue the detailed discussion only for the product.

For valuations on convex sets, the Fourier transform was constructed by the author in \cite{alesker-jdg-03, alesker-fourier}. A different and simpler construction was later given by Faifman and Wannerer \cite{faifman-wannerer}.

For any lattice $\Lam\subset V$ (Definition \ref{D:lattice}), the subgroup of transformations preserving $\Lam$,
$$GL(\Lam)=\{g\in GL(V)\mid g(\Lam)=\Lam\},$$
has, up to scalar multiplication, a unique element
$$V_{\Lam,i}\in Val_i^\infty(V)$$
that is $GL(\Lam)$-invariant.

 Let $\ck\subset Val_1^\infty(V)$ denote the subset consisting of all elements of the form $V_{\Lam,1}$, where $\Lam$ ranges over all lattices in $V$.

The author proved in \cite{alesker-non-arch} that $Val^\infty(V)$ satisfies the non-mixed hard Lefschetz theorem for the product. For the convolution, the analogous statement holds with $V_{\Lam,1}$ replaced by $V_{\Lam,n-1}$.

\item We now state the new results of this paper. We begin with a conjecture formulated for the product; the corresponding formulation for the convolution is immediate.
\begin{conjecture}\label{E:conject}
\begin{enumerate}
\item\label{L:conj1} $Val^\infty(V)$ satisfies the mixed hard Lefschetz theorem for the subset $\ck$ as above.

\item\label{L:conj2} $Val^\infty(V)$ satisfies the mixed Hodge-Riemann relations for the subset $\ck$ as above.
\end{enumerate}
\end{conjecture}
The non-mixed case of part \ref{L:conj1} of the conjecture was proved by the author in \cite{alesker-non-arch}.
The main new result of this paper is a proof of the second part of this conjecture in the special case of the non-mixed Hodge--Riemann relations for $i=1$.
\begin{theorem}\label{T:thm}
For any lattice $\Lam\subset V$, the quadratic form
$$Q(\phi)=-\phi\cdot\bar\phi\cdot (V_{\Lam,1})^{n-2}$$
is positive definite on the subspace
$$P_1={\phi\in Val_1^\infty(V)\mid \phi\cdot (V_{\Lam,1})^{n-1}=0}.$$
\end{theorem}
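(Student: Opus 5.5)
Since everything is $GL(\Lam)$-invariant, I will decompose $Val^\infty_1(V)$ under the compact open subgroup $K=GL(\Lam)$ and reduce the statement to a computation on isotypic components. Recall from Section \ref{S:valuations} that $Val^\infty_1(V)$ is realized as the image of an integral (Radon/cosine-type) operator from smooth sections of a line bundle over a Grassmannian, say of hyperplanes or lines, $\PP(V)$ or $\PP(V^\vee)$, to sections over the complementary Grassmannian. For $i=1$ this gives a concrete model: $\phi\in Val_1^\infty(V)$ corresponds to a smooth function (density) $f$ on the projective space $\PP(V^\vee)$, modulo the kernel of the intertwining operator. I will first write the product $\phi\cdot\psi\cdot(V_{\Lam,1})^{n-2}\in D(V)$ explicitly in this model. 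Both maps $(\phi,\psi)\mapsto \phi\cdot\psi\cdot V_{\Lam,1}^{n-2}$ and $\phi\mapsto\phi\cdot V_{\Lam,1}^{n-1}$ are $K$-equivariant, and $V_{\Lam,1}^{k}$ is the $K$-invariant element of degree $k$ (up to a positive constant, which I will have to pin down). So the Hermitian form becomes a $K$-invariant Hermitian form on $C^\infty(\PP(V^\vee))$ of the shape $\langle T f, f\rangle$ for an explicit $K$-invariant operator $T$ on $L^2(\PP(V^\vee))$ with respect to the $K$-invariant probability measure.

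The key structural step is the following. $K=GL(\Lam)$ acts on $\PP(V^\vee)$ with a Gelfand-pair-like structure: $L^2(\PP(V^\vee))$ decomposes into $K$-invariant subspaces $W_0=\CC\subset W_{\le1}\subset W_{\le2}\subset\cdots$. Here $W_{\le m}$ consists of functions constant on cosets of the reduction map $\PP(V^\vee)\to\PP((\Lam/\pi^m\Lam)^\vee)$, and $W_m=W_{\le m}\ominus W_{\le m-1}$. I expect the intertwining operators defining $Val_1$, and $T$, to act on each $W_m$ by scalars, computed via Hecke-type averages over balls in projective space. The primitivity condition $\phi\cdot V_{\Lam,1}^{n-1}=0$ is a single linear functional, invariant under $K$. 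It is therefore the pairing with the constant function, so $P_1$ corresponds to $f\perp W_0$, i.e.\ $\int f=0$. The theorem then reduces to showing that the eigenvalue $t_m$ of $T$ on $W_m$, for $m\ge1$, has the opposite sign to $t_0$, the latter being positive because $V_{\Lam,1}^n$ is a positive Haar measure.

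To compute the $t_m$, I will use the explicit formula for the product from \cite{alesker-non-arch}, in which a product of degree-one elements is expressed through a kernel of the form $|\langle \xi_1\wedge\dots\wedge\xi_n\rangle|$ on $(\PP(V^\vee))^n$. The $n-2$ copies of $V_{\Lam,1}$ correspond to the constant function $1$, and integrating them out yields a $K$-invariant kernel $k(\xi,\eta)$ that depends only on the non-Archimedean distance $d(\xi,\eta)=q^{-j}$ between lines. The eigenvalue on $W_m$ is then a finite alternating sum of the values of $k$ on spheres around a point, with the spheres' $K$-invariant measures $\sim q^{-j(n-1)}$ explicit. The concrete aim is an analogue of the Archimedean fact that $|\langle\xi,\eta\rangle|$-type kernels are negative on nonconstant harmonics, i.e.\ that $k$ is conditionally negative definite. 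My intended route is to show that $k(\xi,\eta)=a-b\,d(\xi,\eta)^{s}$ with $a,b>0$, and that ultrametric distances to a positive power are conditionally negative definite. The latter holds because $\min$-type kernels $1_{d<r}$ are positive definite, being projections onto the $W_{\le m}$.

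The main obstacle is this middle computation: integrating out the $n-2$ invariant factors to obtain $k$ in closed form with the correct sign and constant, and checking that the resulting kernel's eigenvalues $t_m$ are all negative for $m\ge1$ rather than merely nonzero. Nonvanishing is essentially the hard Lefschetz already known from \cite{alesker-non-arch}, which will serve as a consistency check. I would first try the case $n=2$, where no $V_{\Lam,1}$ factors appear and the form is just the Poincaré pairing on $Val_1$. I would then attempt induction on $n$ by restricting to hyperplanes, using that $V_{\Lam,1}$ restricts to $V_{\Lam\cap H,1}$.
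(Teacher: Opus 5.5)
Your overall plan is viable: reduce to a $K$-invariant two-point kernel on $\PP(V^\vee)$, identify $P_1$ with $\{\int f=0\}$, and use the filtration $W_{\le m}$. But the step you name as your intended route has the sign backwards, and as written it proves the opposite of the theorem.

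\textbf{The sign error.} Suppose $k=a-b\,d^{s}$ with $b>0$ and $d^{s}$ is conditionally negative definite. On $W_m$, $m\ge 1$, the constant $a$ contributes nothing, so $t_m=-b\,\delta_m\ge 0$, where $\delta_m\le 0$ is the eigenvalue of $d^{s}$ on $W_m$. You would conclude $Q\le 0$ on $P_1$.

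In fact that shape cannot occur. The kernel $k(\xi,\eta)=\int|\det(\xi,\eta,\zeta_3,\dots,\zeta_n)|\,d\zeta$ is nonnegative and vanishes on the diagonal, which forces $a=0$ and $b\le 0$.

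\textbf{The correct kernel.} Since $|\xi|=1$, you can choose a basis $\xi,v$ of $\Lam^\vee\cap\mathrm{span}(\xi,\eta)$ that extends to a basis of $\Lam^\vee$. Writing $\eta=a\xi+bv$ gives $|\det(\xi,\eta,\zeta_3,\dots)|=|b|\,|\det(\xi,v,\zeta_3,\dots)|$ with $|b|=d(\xi,\eta)$. Transitivity of $K$ on such pairs then yields $k=C_n\,d(\xi,\eta)$ with $C_n>0$. So the kernel is a \emph{positive} multiple of the ultrametric distance, and $t_m<0$ precisely because $d$ is conditionally negative definite.

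\textbf{The corrected argument closes.} Write $d=1-\sum_{j\ge 0}(q^{-j}-q^{-j-1})\mathbf{1}_{d\le q^{-j-1}}$. Note that $\mathbf{1}_{d\le q^{-m}}/\mu_m$ is the kernel of the orthogonal projection $\Pi_m$ onto $W_{\le m}$, where $\mu_m$ is the common measure of a fiber of reduction modulo $\frak{m}^m$. Then for $\int f=0$,
\[
\iint d(\xi,\eta)\,f(\xi)\overline{f(\eta)}\,d\xi\,d\eta=-\sum_{j\ge 0}(q^{-j}-q^{-j-1})\,\mu_{j+1}\,\|\Pi_{j+1}f\|^2<0 .
\]
Strictness holds because a smooth $f\neq 0$ lies in some $W_{\le M}$.

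\textbf{Comparison with the paper.} The paper never forms a two-point kernel. It combines $V_1^{n-2}\cdot\phi=c_{n,1}\cd_{n-1}(R_{1,n-1}\hat f)$ (Lemma \ref{L:product-intr-vol}) with the two-factor product formula. It then computes the eigenvalues of $R$ and $\cd$ separately on Humphries' explicit $P$-invariant vectors $P_m,\bar P_m$, obtaining $\alpha_m>0$ and $\beta_m<0$ (Propositions \ref{radon-sign} and \ref{cosine-sign}).

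Your corrected route needs neither Humphries' irreducibility theorem nor explicit spherical functions. It does need the $n$-fold $|\det|$ formula for products of degree-one valuations, which you should justify by iterating formula (14.1) of \cite{alesker-non-arch}, as in Lemma \ref{L:producvt-two-val}. The proposed induction on $n$ is unnecessary, since the kernel computation is uniform in $n$.
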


\item Conjecture \ref{E:conject} is motivated by the analogy with the theory of valuations on convex sets. Special cases of the hard Lefschetz theorem were proved by the author in \cite{alesker-jdg-03, alesker-hard-Lefschetz} and by Bernig and Br\"ocker \cite{bernig-broecker}.
More recently, Kotrbat\'y \cite{kotrbaty-conj} conjectured a much more general version of the mixed hard Lefschetz theorem. In the same paper, he also conjectured mixed Hodge--Riemann relations for valuations on convex sets and proved them in a special non-mixed case. Subsequently, Kotrbat\'y and Wannerer \cite{kotrbaty-wannerer} proved another non-mixed case.
Finally, Bernig, Kotrbat\'y, and Wannerer \cite{bernig-kotrbaty-wannerer} proved Kotrbat\'y's conjecture in full for valuations on convex sets.

\item The general strategy of the proof of Theorem \ref{T:thm} is motivated by Kotrbat\'y's proof \cite{kotrbaty-conj} of a special case of the non-mixed Hodge--Riemann relations for even valuations on convex sets.
The quadratic form $Q$ can be expressed in terms of a composition of certain integral transforms, called the Radon and sine transforms, acting on functions on Grassmannians over the field $\FF$; see Lemma \ref{L:form-necessary}\footnote{Lemma \ref{L:form-necessary} is analogous to a formula from \cite{alesker-hard-Lefschetz}  for valuations on convex sets.} This reduces the proof of
Theorem \ref{T:thm} to harmonic analysis on Grassmanians.The relevant operators commute with the action of the maximal compact subgroup $GL(\Lam)$ of the full linear group $GL(V)$. The action of $GL(\Lam)$ on functions on the Grassmannians is multiplicity-free. Therefore, by Schur's lemma, each of the relevant operators acts by a scalar on every irreducible subrepresentation. To prove the main result, one has to verify that the corresponding eigenvalues are positive on the irreducible subspaces associated with valuations. In this paper, we carry out this verification in the special case of the Grassmannian of lines, that is, the projective space.
Our main tool for this step is the theory of spherical harmonics over non-Archimedean local fields, developed recently by P. Humphries \cite{humphries}. At present, an analogous theory for other Grassmannians appears to be unavailable.

\item {\bf Acknowledgement.} I thank P. Humphries for useful correspondence on MathOverflow, in particular for bringing \cite{humphries} to my attention.
\end{paragraphlist}

\section{Reminder on local fields.}\label{S:local-fields}
\begin{paragraphlist}
\item In this section we collect a few basic well known facts on local fields sufficient for this paper. We refer to \cite{weil}, Ch. 1, for details.

By definition, a local field is a topological locally compact non-discrete field such that the operations of addition, multiplication, and division are continuous in this topology.
There is a classification of such fields: they are precisely $\RR,\CC, \FF_q((t))$, and finite extensions of the
fields of $p$-adic numbers $\QQ_p$. Here $\FF_q$ denotes the finite field with $q$ elements, and $\FF_q((t))$ denotes the field of formal Laurent power series.
The first two examples, namely $\RR,\CC$, are called Archimedean, while all others are called non-Archimedean local fields.

\item Let $\FF$ be a non-Archimedean local field. It has a unique maximal compact subring $\co\subset \FF$. For example if $\FF=\FF_q((t))$ then $\co=\FF_q[[t]]$ is the ring of all Taylor power series.
If $\FF=\QQ_p$ then $\co=\ZZ_p$ is the ring $p$-adic integers.

The field of fractions of $\co$ equals $\FF$.

\item $\co$ has a unique maximal ideal $\frak{m}\subset \co$. For example for $\FF=\FF_p((t))$ the ideal $\frak{m}$ is generated by $t$,
while for $\FF=\QQ_p$ the ideal $\frak{m}$ is generated by $p$.

The quotient $k:=\co/\frak{m}$ is necessarily a finite field; it is called the residue field of $\FF$.

\item There exists a unique multiplicative norm
$$ |\cdot|\colon \FF\to \RR_{\geq 0}$$
such that
\begin{eqnarray*}
|x|=1 \,\,\, \forall x\in\co\backslash \frak{m},\\
|x|=\frac{1}{|k|^{i}} \,\,\, \forall x\in \frak{m}^i\backslash \frak{m}^{i+1}, \mbox{where } i\geq 1.
\end{eqnarray*}
where $|k|$ denotes the cardinality of the residue field.
Multiplicativity means that  $|x\cdot y| =|x|\cdot |y|$ for any $x,y\in \FF$.

This norm satisfies the strengthened triangle inequality
$$|x+y|\leq \max\{|x|,|y|\}.$$

\item The norm $|\cdot|$ has the following property. Let $\mu$ be a Lebesgue measure on $\FF$ ($\mu$ exists and is unique up to a proportionality). Let $x\in \FF$.
Then $$\mu(x\cdot A)=|x|\mu(A)$$
for any compact subset $A\subset \FF$.

\end{paragraphlist}

\section{Lattices over non-Archimedean local fields.}\label{S:lattice}
\begin{paragraphlist}
\item In this section $\FF$ denotes a non-Archimedean local field, and $\co\subset \FF$ its ring of integers. In this section we review, mostly following \cite{weil}, a few well known facts on finite dimensional $\FF$-vector spaces
and lattices in them.

A proof of the following result can be found in \cite{schaeffer}, Thm. 3.2, Ch. 1.
\begin{theorem}\label{T:isomorphism-finite-dim}
Let $V$ be an $n$-dimensional Hausdorff topological vector space over local field $\FF$. Let $v_1,\dots,v_n$ be its basis. Then the map $\FF^n\to V$ given by
$$(x_1,\dots,x_n)\mapsto x_1v_1+\dots+x_nv_n$$
is an isomorphism of topological vector spaces when the source space is equipped with the product topology.
\end{theorem}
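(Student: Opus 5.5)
The plan is to handle the easy directions first and concentrate on the continuity of the inverse. Let $f\colon \FF^n\to V$ be the map $(x_1,\dots,x_n)\mapsto \sum x_iv_i$. It is a linear bijection because $v_1,\dots,v_n$ is a basis. It is continuous because addition $V\times V\to V$ and scalar multiplication $\FF\times V\to V$ are continuous, and $f$ is built from these. So the only real content is that $f^{-1}$ is continuous. Since $f$ is linear and translations are homeomorphisms, it suffices to show the following: for every neighborhood $B$ of $0$ in $\FF^n$, $f(B)$ contains a neighborhood of $0$ in $V$.

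Throughout, equip $\FF^n$ with the norm $\|x\|=\max_i|x_i|$; by the strengthened triangle inequality its balls generate the product topology. Fix a uniformizer $\pi\in\frak{m}\backslash\frak{m}^2$, so $|\pi|=|k|^{-1}<1$, and recall that every nonzero $|x|$ is an integer power of $|k|$.

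\textbf{Step 1 (compact sphere).} Let $S=\{x\in\FF^n\mid \|x\|=1\}$. This is a closed subset of $\co^n$, which is compact, so $S$ is compact. Hence $f(S)$ is compact in $V$. Since $V$ is Hausdorff, $f(S)$ is closed, and it does not contain $0$ because $f$ is injective. Therefore $U:=V\backslash f(S)$ is an open neighborhood of $0$.

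\textbf{Step 2 (balanced neighborhoods).} I will show $U$ contains an open neighborhood $W$ of $0$ that is balanced, i.e. $aW\subset W$ whenever $|a|\le 1$. By continuity of scalar multiplication at $(0,0)$ there exist $\delta>0$ and an open $U'\ni 0$ such that $aU'\subset U$ for all $|a|\le\delta$. Put
$$W=\bigcup_{0<|a|\le\delta} aU'.$$
Each $aU'$ is open, since multiplication by $a\ne0$ is a homeomorphism of $V$; the union is nonempty because $\FF$ is non-discrete. By construction $W\subset U$, and $W$ is balanced since $|ba|\le\delta$ whenever $|b|\le1$ and $|a|\le\delta$.

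\textbf{Step 3 (key step: the preimage of $W$ lies in the unit ball).} This is the step I expect to be the main obstacle, since it is where balancedness, the discreteness of the value group, and the fact that $W$ misses $f(S)$ must be combined. I claim $f^{-1}(W)\subset\{\|x\|\le 1\}$. Suppose instead $x\in f^{-1}(W)$ with $\|x\|>1$. Then $\|x\|=|k|^m$ for some integer $m\ge1$. Set $a=\pi^m$, so $|a|=|k|^{-m}<1$ and $\|ax\|=1$, i.e. $ax\in S$. Since $W$ is balanced, $f(ax)=af(x)\in W\subset U$, which contradicts $U\cap f(S)=\emptyset$. This proves the claim.

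\textbf{Step 4 (conclusion).} For any integer $m\ge0$, the set $\pi^mW$ is an open neighborhood of $0$ in $V$. By Step 3 and linearity of $f$, we get $f^{-1}(\pi^mW)\subset\{\|x\|\le|k|^{-m}\}$. Since the balls $\{\|x\|\le|k|^{-m}\}$ form a basis of neighborhoods of $0$ in $\FF^n$, every neighborhood $B$ of $0$ contains such a ball, and hence $f(B)\supset\pi^mW$ for suitable $m$. This is exactly the reduction stated at the outset, so $f^{-1}$ is continuous and $f$ is an isomorphism of topological vector spaces. No induction on $n$ is needed, because compactness of the sphere $S$ replaces the usual inductive argument.
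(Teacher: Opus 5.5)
Your proof is correct, and it takes a different route from the one the paper relies on. The paper gives no proof of its own; it refers to Schaefer (Ch.~I, Thm.~3.2). There the statement is proved for an arbitrary complete, non-discretely valued field by induction on $n$. By the induction hypothesis every hyperplane of $V$ is isomorphic to $\FF^{n-1}$, hence complete and therefore closed in $V$. A linear form with closed kernel is continuous, so the coordinate functionals that make up $f^{-1}$ are continuous.

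You instead use local compactness of $\FF$. Compactness of the sphere $S\subset\co^n$ makes $V\setminus f(S)$ a neighborhood of $0$, which you shrink to a balanced one. Balancedness together with the discreteness of the value group then confines $f^{-1}(W)$ to the unit ball, as in the classical proof for $\RR^n$.

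Your route is shorter and self-contained: it needs no induction, no completeness argument, and no lemma on linear forms with closed kernels. The price is generality. It needs $\co$ to be compact, so it would not cover complete fields that are not locally compact, such as $\CC_p$, which the cited argument handles. For the local fields considered in the paper this costs nothing.

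One minor point: in Step 2, treat $b=0$ separately ($0\cdot W=\{0\}\subset W$), since $W$ is defined as a union over $0<|a|\le\delta$ only.
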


\item Let $V$ be an $n$-dimensional Hausdorff topological vector space over the local field $\FF$.
\begin{definition}\label{D:lattice}
A lattice $\Lam$ in $V$ is a compact open $\co$-submodule of $V$.
\end{definition}

 \begin{lemma}\label{L:sublattice-quotient}
Let $\Lam\subset V$ be a lattice. Let $E\subset V$ be a vector subspace. Then $E\cap \Lam$ is a lattice in $E$, and $\Lam/(E\cap \Lam)$ is a lattice in $V/E$.
\end{lemma}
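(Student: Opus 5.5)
The claim is that for a lattice $\Lam\subset V$, the intersection $E\cap\Lam$ is a lattice in $E$ and the image $\Lam/(E\cap\Lam)$ is a lattice in $V/E$. I would first reduce to coordinates and then check compactness, openness and the $\co$-module property directly.

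\textbf{Step 1: the subspace $E$.} By Theorem \ref{T:isomorphism-finite-dim}, any vector subspace $E$ of $V$ is closed, being finite dimensional and Hausdorff. Its subspace topology is the standard one, since it is a Hausdorff topological vector space and the theorem applies to it as well. Hence:
\begin{itemize}
\item $E\cap\Lam$ is a closed subset of the compact set $\Lam$, so it is compact;
\item $E\cap\Lam$ is open in $E$, because $\Lam$ is open in $V$;
\item $E\cap\Lam$ is an $\co$-submodule, being an intersection of $\co$-submodules.
\end{itemize}
So $E\cap\Lam$ is a lattice in $E$.

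\textbf{Step 2: the quotient $V/E$.} Choose a complement $W$ with $V=E\oplus W$. By Theorem \ref{T:isomorphism-finite-dim} applied in a basis adapted to this decomposition, $V\cong E\times W$ topologically. The quotient map $\pi\colon V\to V/E$ is then identified with the projection onto $W$. In particular $\pi$ is continuous and open, and $V/E$ is Hausdorff.

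The image $\pi(\Lam)\cong \Lam/(E\cap\Lam)$ has the following properties:
\begin{itemize}
\item it is compact, as the continuous image of a compact set;
\item it is open, since $\pi$ is an open map;
\item it is an $\co$-submodule, since $\pi$ is $\co$-linear.
\end{itemize}
So $\pi(\Lam)$ is a lattice in $V/E$.

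\textbf{Main obstacle.} The only delicate point is the topological bookkeeping: that $E$ is closed, and that the quotient topology on $V/E$ agrees with the standard one, so that $\pi$ is open. Both follow from Theorem \ref{T:isomorphism-finite-dim} via the decomposition $V=E\oplus W$. Openness of $\pi$ can also be seen directly, since $\pi(U)=\pi(U+E)$ and $U+E$ is open for any open $U$.
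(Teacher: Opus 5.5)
Your proposal is correct and takes the same approach as the paper: the paper only says the lemma follows immediately from Definition \ref{D:lattice}. You supply the routine checks it leaves implicit, namely that $E$ is closed, that the quotient map $V\to V/E$ is open, and that both $E\cap\Lam$ and its image in $V/E$ are compact, open $\co$-submodules.
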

{\bf Proof.} This immediately follows from Definition \ref{D:lattice}. \qed

\begin{theorem}[\cite{weil}, Ch. II, \S 2, Thm. 1]\label{T:free-lattice}
Let $\Lam\subset V$ be a lattice.
\newline
(i) Then $V$ has a basis $v_1,\dots, v_n$ such that $\Lam=\co v_1\oplus\dots \oplus \co v_n$. In particular $\Lam$ is a free module of rank $n$.
\newline
(ii) Moreover if $\{0\}=V_0\subset V_1\subset \dots\subset V_{n-1}\subset V_n=V$ be a sequence of linear subspaces such that $\dim V_i=i$. Then the above vectors $ v_1,\dots, v_n$
can be chosen so that $v_{1},\dots,v_i$ is a basis of $V_i$ for any $i$.
\end{theorem}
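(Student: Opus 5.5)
We argue by induction on $n=\dim V$, proving (i) and (ii) together, since (ii) implies (i). Fix the flag $V_1\subset\dots\subset V_{n-1}$. By Lemma \ref{L:sublattice-quotient}, $\Lam'=V_{n-1}\cap\Lam$ is a lattice in $V_{n-1}$, and $\bar\Lam=\Lam/\Lam'$ is a lattice in the line $V/V_{n-1}$. The induction hypothesis applied to $V_{n-1}$, the lattice $\Lam'$ and the flag $V_1\subset\dots\subset V_{n-2}$ gives $v_1,\dots,v_{n-1}$ with $\Lam'=\co v_1\oplus\dots\oplus\co v_{n-1}$ and $v_1,\dots,v_i$ a basis of $V_i$ for every $i\le n-1$.

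The key step is the one-dimensional case: every lattice $L$ in a one-dimensional space $\ell$ has the form $\co w$ for some $w\neq 0$. Identify $\ell$ with $\FF$ via Theorem \ref{T:isomorphism-finite-dim}. Since $L$ is open, it contains a neighbourhood $\frak m^N$ of $0$, so $L\neq0$. Since $L$ is compact, $|\cdot|$ is bounded on $L$, and its values lie in the discrete set $\{0\}\cup\{|k|^{j}: j\in\ZZ\}$. Hence the maximum of $|\cdot|$ on $L$ is attained at some $w\in L$. Because $L$ is an $\co$-module, $\co w\subset L$. Conversely, for $x\in L$ we have $|x/w|\le 1$, so $x/w\in\co$ and $x\in\co w$. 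Thus $L=\co w$. This is the only place where non-Archimedean input (discreteness of the valuation and $\co=\{|x|\le1\}$) is used, and it is the main point to verify carefully.

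Apply this to $\bar\Lam=\co\bar w$ and choose a lift $v_n\in\Lam$ of $\bar w$. Then $v_n\notin V_{n-1}$, so $v_1,\dots,v_n$ is a basis of $V$ adapted to the flag. It remains to show $\Lam=\Lam'\oplus\co v_n$. The inclusion $\supseteq$ is clear. For the reverse, take $x\in\Lam$. Its image in $\bar\Lam$ equals $a\bar w$ with $a\in\co$, so $x-av_n\in\Lam\cap V_{n-1}=\Lam'$. The sum is direct because $\co v_n\cap V_{n-1}=0$. Hence $\Lam=\co v_1\oplus\dots\oplus\co v_n$, which proves (ii).

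For (i), pick any full flag, for instance one built from an arbitrary basis, and apply (ii). Freeness of rank $n$ follows because the $v_i$ are $\FF$-linearly independent. The base case $n=1$ is the one-dimensional lemma itself, and the case $n=0$ is trivial.
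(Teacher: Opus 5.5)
Your proof is correct. The paper gives no argument of its own here and simply cites Weil, so there is no in-paper proof to compare against; your induction along the flag is the standard proof and is complete. The induction passes to $\Lam\cap V_{n-1}$ and to the image of $\Lam$ in the line $V/V_{n-1}$, and it settles rank one by showing that a compact open $\co$-submodule of $\FF$ equals $\co w$, where $w$ is an element of maximal norm.

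One point is used tacitly: $V_{n-1}$ is closed in $V$. This makes $V/V_{n-1}$ Hausdorff, so that Theorem \ref{T:isomorphism-finite-dim} can be applied to it, and it follows immediately from that same theorem.
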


\begin{remark}\label{Rem:basis-of-lattice}
In the assumptions of part (ii) of the last theorem, one clearly has for each $i$
$$\Lam\cap V_i=\co v_1\oplus\dots\oplus \co v_i.$$
\end{remark}

\item Given a lattice $\Lam\subset V$. Denote by $GL(\Lam)$ the subgroup
$$GL(\Lam):=\{T\in GL(V)|\, T(\Lam)=\Lam\}.$$

\begin{proposition}\label{P:transitivity}
Let $\Lam\subset V$ be a lattice. The natural action of the group $GL(\Lam)\simeq GL_n(\co)$ on the Grassmannian $Gr_i^V$ is transitive.
\end{proposition}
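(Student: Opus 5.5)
To prove Proposition \ref{P:transitivity}, I will show that every $i$-dimensional subspace $E\subset V$ can be carried by an element of $GL(\Lam)$ to one fixed reference subspace. First fix, by Theorem \ref{T:free-lattice}(i), a basis $e_1,\dots,e_n$ of $V$ with $\Lam=\co e_1\oplus\dots\oplus\co e_n$, and let $E_0$ be the span of $e_1,\dots,e_i$. The claim is that for every $E\in Gr_i^V$ there is $T\in GL(\Lam)$ with $T(E_0)=E$; transitivity follows at once.

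Given $E$, choose a complete flag $\{0\}=V_0\subset V_1\subset\dots\subset V_n=V$ with $\dim V_j=j$ and $V_i=E$. To build it, take any basis of $E$, extend it to a basis of $V$, and let $V_j$ be the span of the first $j$ vectors. By Theorem \ref{T:free-lattice}(ii), applied to the same lattice $\Lam$, there is a basis $v_1,\dots,v_n$ of $V$ with $\Lam=\co v_1\oplus\dots\oplus\co v_n$ and such that $v_1,\dots,v_i$ is a basis of $V_i=E$.

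Define $T\in GL(V)$ by $T(e_k)=v_k$ for $k=1,\dots,n$. This is invertible because both families are bases. Since $T$ sends the $\co$-basis $\{e_k\}$ of $\Lam$ onto the $\co$-basis $\{v_k\}$ of the same $\Lam$, we get $T(\Lam)=\co v_1\oplus\dots\oplus\co v_n=\Lam$, so $T\in GL(\Lam)$. By construction $T(E_0)$ is the span of $v_1,\dots,v_i$, which is $E$.

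The isomorphism $GL(\Lam)\simeq GL_n(\co)$ comes from writing $GL(\Lam)$ in the basis $\{e_k\}$. A matrix $g$ preserves $\co^n$ exactly when $g$ and $g^{-1}$ both have entries in $\co$, that is, when $g\in GL_n(\co)$. There is no real obstacle here: all the substance sits in part (ii) of Theorem \ref{T:free-lattice}, which we are allowed to assume, and the only point to check is that the flag can be chosen so that $E$ appears in it.
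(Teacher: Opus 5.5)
Your proof is correct: Theorem \ref{T:free-lattice}(ii) gives an $\co$-basis of $\Lam$ whose first $i$ vectors span $E$, and the linear map sending a fixed basis of $\Lam$ to this one lies in $GL(\Lam)$ and carries $E_0$ to $E$. The paper gives no proof of its own; it only cites an external reference, but it places Theorem \ref{T:free-lattice}(ii) immediately before this proposition, and your argument is essentially the adapted-basis argument that setup points to.
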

For a proof see, e.g., \cite{alesker-non-arch}, Proposition 4.6.

\end{paragraphlist}

\section{Grassmannians.}\label{S:grassmannians.}
\begin{paragraphlist}
\item The material of this section is a folklore. We describe the Grassmannian over a commutative ring rather than a field.

\item Let $A$ be a commutative local ring with a unit.  $A$ is called local if it has a unique maximal ideal $\frak{p}$ different from $A$.
The quotient ring $A/\frak{p}$ is necessarily a field, called the residue field, and will be denoted by
$$\kappa=A/\frak{p}.$$



\item Let $(A,\frak{p})$ be a local ring. We denote by $Gr_{k,n}(A)$ the set of free $A$-submodules $\cp\subset A^n$ of rank $k$, i.e., $\cp\simeq A^k$,  and such that there exists
a free submodule $\cq\subset A^n$ such that $\cp\oplus \cq=A^n$.

Note that necessarily  $\cq$ is a free module of rank $n-k$.
\begin{remark}
\begin{enumerate}
\item If $A$ is a field, then $Gr_{k,n}(A)$ is the usual set of linear $k$-dimensional subspaces.

\item It is well known that if $(A,\frak{p})$ is a local ring, then if $\cp,\cq\subset A^n$ are submodules such that
$$A^n=\cp\oplus \cq$$ then $\cp$ and $\cq$ are free, and the sum of their ranks equals $n$; see, e.g., Exercise 4.11(a) in \cite{eisenbud-book}.
For this work, the main examples of the rings of integers $\co$ in a local field $\FF$ and quotients of it by arbitrary ideal are local.

\item Apparently, the above definition of the Grassmannian coincides with the standard one only in the case of local rings. However, this is the only case of interest in this paper.
\end{enumerate}
\end{remark}

\item If $B$ is a quotient ring of $A$, then we have the canonical map of sets
\begin{eqnarray}\label{E:grass-map}
\iota\colon Gr_{k,n}(A)\to Gr_{k,n}(B).
\end{eqnarray}
Indeed, a direct summand $\cp\subset A^n$  is mapped to the direct summand $\cp\otimes_AB\subset B^n$.

\item

\begin{lemma}\label{E:maps-grassm-local}
Let $(A,\frak{p})$ be a commutative local ring with a unit. Let $\frak{q}\subset \frak{p}$ be an ideal. Then  the map (\ref{E:grass-map})
$$\iota\colon Gr_{k,n}(A)\to Gr_{k,n}(A/\frak{q})$$
is onto.
\end{lemma}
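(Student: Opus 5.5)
Given $\bar\cp\in Gr_{k,n}(A/\frak{q})$, I will lift a basis of $\bar\cp$ to a basis of a direct summand of $A^n$. Write $B=A/\frak{q}$ and $\pi\colon A\to B$ for the projection. Since $\frak{q}\subset\frak{p}$, the ring $B$ is local with maximal ideal $\frak{p}/\frak{q}$ and the same residue field $\kappa$. By definition, $\bar\cp$ is free of rank $k$ and has a free complement $\bar\cq$ with $\bar\cp\oplus\bar\cq=B^n$. Choose a basis $\bar e_1,\dots,\bar e_k$ of $\bar\cp$ and a basis $\bar e_{k+1},\dots,\bar e_n$ of $\bar\cq$; the rank of $\bar\cq$ is $n-k$ by the remark above. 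Together these form a basis of $B^n$, so the $n\times n$ matrix $\bar M$ with these columns is invertible over $B$.

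Next, lift each $\bar e_j$ to some $e_j\in A^n$ coordinatewise, and let $M$ be the $n\times n$ matrix over $A$ with columns $e_j$. Then $\pi(\det M)=\det\bar M$ is a unit in $B$. The key step is to show that $\det M$ is a unit in $A$. If it were not, it would lie in $\frak{p}$, because in a local ring the nonunits are exactly the elements of $\frak{p}$. Its image in $B$ would then lie in $\frak{p}/\frak{q}$, which is the maximal ideal of $B$, so the image would be a nonunit, a contradiction. Equivalently, one can reduce everything to the common residue field $\kappa$: an element of a local ring is a unit if and only if its image in $\kappa$ is nonzero. This is where the hypothesis $\frak{q}\subset\frak{p}$ is used, and it is the only point that needs any care.

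Since $\det M$ is a unit, $M$ is invertible over $A$ (its inverse is the adjugate divided by $\det M$), so $e_1,\dots,e_n$ is a basis of $A^n$. Set $\cp=Ae_1\oplus\dots\oplus Ae_k$ and $\cq=Ae_{k+1}\oplus\dots\oplus Ae_n$. Both are free, of ranks $k$ and $n-k$, and $\cp\oplus\cq=A^n$, so $\cp\in Gr_{k,n}(A)$. Finally, $\cp\otimes_AB$ maps isomorphically onto the submodule of $B^n$ spanned by $\pi(e_1),\dots,\pi(e_k)$, since $\cp$ is a direct summand and tensoring with $B$ preserves the decomposition. That span is $\bar e_1,\dots,\bar e_k$, i.e.\ $\bar\cp$. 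Hence $\iota(\cp)=\bar\cp$, and $\iota$ is onto.
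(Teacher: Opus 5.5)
Your proposal is correct and follows the paper's proof. Both lift bases of $\bar\cp$ and of a free complement $\bar\cq$, and both argue that the matrix of lifts has unit determinant because its reduction does. The paper checks this by reducing to the residue field $\kappa$, which is the equivalent variant you mention. Both then conclude that the lifts form a basis of $A^n$, so the span of the first $k$ is a direct summand lifting $\bar\cp$.
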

{\bf Proof.} We have the obvious map
\begin{eqnarray}\label{E:map-modulo}
 A^n\to (A/\frak{q})^n.
\end{eqnarray}
Assume we have a direct sum decomposition
$$(A/\frak{q})^n=\bar\cp\oplus \bar\cq,$$
where $\bar\cp$ and $ \bar\cq$ are free of ranks $k$ and $n-k$ respectively.

Let us choose bases $\{\bar p_a\}_{a=1}^k$ of $\bar\cp$ and $\{\bar q_b\}_{b=k+1}^{n}$ of $\bar\cq$. Let $\{p_a\}$ and $\{q_b\}$ be their lifts $A^n$.
Let $\cp$ be $A$-submodule of $A^n$ generated by $\{p_a\}$, and $\cq$ be submodule generated by $\{q_b\}$.

Let $\cp$ be the $A$-submodule of $A^n$ generated by $\{p_a\}$, and $\cq$ - by $\{q_b\}$. We claim that
\begin{eqnarray}\label{E:p-plus-q}
\cp\oplus \cq =A^n,\, \cp \simeq A^k,
\end{eqnarray}
and the image of $\cp$ under the map (\ref{E:map-modulo}) is equal to $\bar \cp$.

The last claim is obvious.

The $n$ elements of $\kappa^n$ obtained by reduction of $\{p_a\}_{a=1}^k$ and $\{q_b\}_{b=k+1}^{n}$ modulo $\frak{p}$ form a basis of $\kappa^n$.

Let $T$ be $n\times n$ matrix with coefficients in $T$ whose columns are  $\{p_a\}_{a=1}^k$ and $\{q_b\}_{b=k+1}^{n}$. Its reduction modulo $\frak{p}$ has linearly independent column over $\kappa$.
Hence $\det T$ is invertible in $A$. Hence the map $\tilde T\colon A^n\to A^n$ given by the multiplication by $T$
$$x\mapsto Tx$$
is an isomorphism of $A$-modules. The inverse isomorphism maps
$$\tilde T^{-1}(p_a)=e_a,\, \tilde T^{-1}(q_b)=e_b,$$
where $e_1,\dots,e_n$ is the standard basis of $A^n$. This implies (\ref{E:p-plus-q}).  \qed

\item For a ring of integers $\co$ of a local field $\FF$, by Lemma \ref{E:maps-grassm-local} we have an infinite sequence of surjective maps
\begin{eqnarray}\label{E:grassmannians-seq}
Gr_{k,n}(\co/\frak{m})\overset{\iota_1}{\leftarrow} Gr_{k,n}(\co/\frak{m}^2)\overset{\iota_2}{\leftarrow} Gr_{k,n}(\co/\frak{m}^3)\overset{\iota_3}{\leftarrow} \dots.
\end{eqnarray}
We have the obvious maps
\begin{eqnarray}\label{E:map-theta}
Gr_{k,n}(\co)\overset{\theta_l}{\to} Gr_{k,n}(\co/\frak{m}^l)
\end{eqnarray}
which are  surjective by Lemma \ref{E:maps-grassm-local}. These maps $\theta_l$ are continuous.

These maps satisfy
$$\iota_{l-1}\circ \theta_l=\theta_{l-1}.$$
They induce a unique map to the inverse limit
$$\theta\colon Gr_{k,n}(\co)\to \lim_{\underset{l}{\leftarrow}}Gr_{k,n}(\co/\frak{m}^l),$$
where the target space is equipped with the Tychonoff topology.

\begin{lemma}
The map $\theta$ is a homeomorphism.
\end{lemma}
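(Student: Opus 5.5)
We will show that $\theta$ is a continuous bijection from a compact space to a Hausdorff space; it is then automatically a homeomorphism. The topology on $Gr_{k,n}(\co)$ is the one induced from $Gr_k^{\FF^n}$, and we use that $Gr_{k,n}(\co)\to Gr_k^{\FF^n}$, $\cp\mapsto \cp\otimes_\co\FF$, is a bijection. Indeed, by Theorem \ref{T:free-lattice}(ii) applied to $\Lam=\co^n$ and a flag through a subspace $E$, the intersection $E\cap\co^n$ is a direct summand of $\co^n$ of rank $\dim E$. Conversely, a direct summand $\cp$ satisfies $\cp=(\cp\otimes\FF)\cap\co^n$, because $\co^n/\cp\simeq\cq$ is torsion free. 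Thus $Gr_{k,n}(\co)$ is the Grassmannian $Gr_k^{\FF^n}$, which is compact by Proposition \ref{P:transitivity}, being an orbit of the compact group $GL_n(\co)$. Continuity of $\theta$ reduces to continuity of each $\theta_l$, which was noted above. The target is an inverse limit of finite discrete sets, hence compact Hausdorff.

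Surjectivity of $\theta$ follows from compactness. Let $(x_l)$ be a compatible sequence. The fibers $\theta_l^{-1}(x_l)$ are nonempty by Lemma \ref{E:maps-grassm-local}. They are closed, as preimages of points under the continuous maps $\theta_l$ into finite discrete sets. They are decreasing in $l$, because $\iota_{l-1}\circ\theta_l=\theta_{l-1}$. Their intersection is therefore nonempty, which gives a preimage of $(x_l)$.

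Injectivity is the main step. Suppose $\theta(\cp)=\theta(\cp')$, i.e.\ $\cp+\frak{m}^l\co^n=\cp'+\frak{m}^l\co^n$ for all $l$. Since $\cp$ is a direct summand, there is a decomposition $\co^n=\cp\oplus\cq$, and from it
$$\bigcap_l(\cp+\frak{m}^l\co^n)=\cp+\bigcap_l\frak{m}^l\cq=\cp,$$
where the last equality holds because $\bigcap_l\frak{m}^l=0$ in $\co$ and $\cq$ is free. Similarly $\bigcap_l(\cp'+\frak{m}^l\co^n)=\cp'$, hence $\cp=\cp'$.

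The only delicate point is the identification of the topology on $Gr_{k,n}(\co)$ and the continuity of $\theta_l$, which the text takes for granted. If needed, one can justify it by noting that $\theta_l$ is constant on orbits of the open subgroup $\ker(GL_n(\co)\to GL_n(\co/\frak{m}^l))$. Since the action is continuous and transitive, these orbits are open.
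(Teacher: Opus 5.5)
Your proof is correct and takes the paper's route: continuity of the $\theta_l$, injectivity by recovering $\cp$ from its reductions via a complement $\cq$, and surjectivity from compactness of $Gr_{k,n}(\co)$ together with surjectivity of each $\theta_l$. Your identity $\bigcap_l(\cp+\frak{m}^l\co^n)=\cp$ and your nested closed fibers are coordinate-free versions of the paper's element $X\in\cp_2\setminus\cp_1$ and its convergent subsequence. You also make explicit the topology on $Gr_{k,n}(\co)$ and the compact-to-Hausdorff argument for continuity of $\theta^{-1}$, both of which the paper leaves implicit.
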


{\bf Proof.} The continuity of this is clear. To prove injectivity,
assume that
\begin{eqnarray}\label{E:map-theta}
\theta(\cp_1)=\theta(\cp_2).
\end{eqnarray}
That mean that for any $l\geq 0$
$$\cp_1\otimes_\co \co/\frak{m}^l=\cp_2\otimes_\co \co/\frak{m}^l$$
as submodules of $(\co/\frak{m}^l)^n$. The exists $\cq_1$ such that
$$\co^n=\cp_1\oplus \cq_1.$$
By assumption, $\cp_1$ and $\cq_1$ are free of ranks $k$ and $n-k$ respectively. Hence we may assume after a change of basis of $\co^n$ that
$$\cp_1=\{(x_1,\dots,x_k,\underset{n-k\mbox{ times}}{\underbrace{0,\dots,0}}).$$
If there exists $X\in\cp_2\backslash \cp_1$ then there is $k<i\leq n$ such that
$$X_i\ne 0.$$
Then there exists $l>0$ such that
$$X_i \not\equiv 0 \mbox{ mod } \frak{m}^l.$$
This implies that $$X \mbox{ mod }\frak{m}^l\not\in \cp_1\otimes_\co \co/\frak{m}^l$$
in contradiction to (\ref{E:map-theta}).
Hence $\cp_2\subset\cp_1$. By symmetry, the opposite inclusion holds too.

Let us prove that $\theta$ is onto. Let $$\{P_l\}_{l=1}^\infty \in \lim_{\underset{l}{\leftarrow}}Gr_{k,n}(\co/\frak{m}^l),$$ namely
$$\iota_{l-1}(P_l)=P_{l-1}.$$
Since $\theta_l$ is onto by Lemma \ref{E:maps-grassm-local}, there exist $\tilde P_l\in Gr_{k,n}(\co)$ such that
$$\theta_l(\tilde P_l)=P_l.$$
Since $Gr_{k,n}(\co)$ is compact, the sequence $\{\tilde P_s\}_{s=1}^\infty$ has a subsequence converging to $\tilde P\in Gr_{k,n}(\co)$.
It is easy to see that for any $l\geq 1$
$$\theta_l(\tilde P)=P_l.$$
\qed

\item For a local ring $A$, the group $GL_n(A)$ acts naturally on $Gr_{k,n}(A)$:
$$(g,P)\mapsto g(P).$$
This action is obviously transitive.
In particular, $GL_n(\co/\frak{m}^l)$ acts transitively on $Gr_{k,n}(\co/\frak{m}^l)$. The ring homomorphism
$$\co\to \co/\frak{m}^l$$
induces the obvious group homomorphism
$$GL_n(\co)\to GL_n(\co/\frak{m}^l).$$
This homomorphism defines an action of $GL_n(\co)$ on all $Gr_{k,n}(\co/\frak{m}^l)$.

It is easy to see that under this action all the maps
$$\iota_l\colon Gr_{k,n}(\co/\frak{m}^{l+1})\to Gr_{k,n}(\co/\frak{m}^{l})$$
are $GL_n(\co)$-equivariant. Hence the projective limit $$ \lim_{\underset{l}{\leftarrow}}Gr_{k,n}(\co/\frak{m}^l)$$ is acted continuously by the group $GL_n(\co)$, and
the homeomorphism $\theta$ in (\ref{E:map-theta}) is also $GL_n(\co)$-equivariant.

\item Let us describe a stabilizer in $GL_n(\co)$ of a point in the Grassmannian $Gr_{k,n}(\co/\frak{m}^l)$. Consider the submodule of $(\co/\frak{m}^l)^n$
spanned by the last $k$ standard vectors. One can easily check the following lemma.
\begin{lemma}\label{L:stabilizer}
Let $l\geq 1$.
The stabilizer of the above submodule in $GL_n(\co)$ is equal to the subgroup
$$P_k(\frak{m}^l)= \left\{\left[ \begin{array}{c|c}
                   A&b\\
                   \hline
                   c&d
                   \end{array}\right]\in GL_n(\co)\Huge|\, b\in Mat_{n-k\times k}(\frak{m}^l),\, d\in GL_k(\co)\right\},$$

where $Mat_{n-k\times k}(\frak{m}^l)$ denotes the space of $(n-k)\times k$ matrices with entries in $\frak{m}^l$.
\end{lemma}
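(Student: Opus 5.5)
The plan is a direct matrix computation. Write $R=\co/\frak{m}^l$, $\bar g$ for the reduction of $g\in GL_n(\co)$ modulo $\frak{m}^l$, and $M\subset R^n$ for the submodule spanned by the last $k$ standard vectors $e_{n-k+1},\dots,e_n$. Decompose
$$g=\left[ \begin{array}{c|c} A&b\\ \hline c&d \end{array}\right],$$
with $A$ of size $(n-k)\times(n-k)$, $b$ of size $(n-k)\times k$, $c$ of size $k\times(n-k)$ and $d$ of size $k\times k$. The action of $g$ on $Gr_{k,n}(R)$ factors through $\bar g$, so $g$ stabilizes $M$ if and only if $\bar g(M)=M$.

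\textbf{Step 1: reduce to an inclusion.} Since $R$ is a finite ring, $M$ is a finite set. The map $\bar g$ is an automorphism of $R^n$, hence injective on $M$. Therefore $\bar g(M)\subset M$ already forces $\bar g(M)=M$. So the stabilizer condition is equivalent to $\bar g(M)\subset M$.

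\textbf{Step 2: read off the condition on $b$.} For $j>n-k$, the vector $\bar g(e_j)$ is the $j$-th column of $\bar g$. Its first $n-k$ entries are the corresponding column of $\bar b$. It lies in $M$ exactly when those entries vanish in $R$, i.e.\ when that column of $b$ has entries in $\frak{m}^l$. Hence $\bar g(M)\subset M$ if and only if $b\in Mat_{n-k\times k}(\frak{m}^l)$.

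\textbf{Step 3: show $d\in GL_k(\co)$ is automatic.} If $b\equiv 0$ modulo $\frak{m}^l$, then $\bar g$ is block lower triangular, so $\det g\equiv \det A\cdot\det d \pmod{\frak{m}^l}$. Because $\det g$ is a unit of $\co$ and $l\geq 1$, the right-hand side is nonzero modulo $\frak{m}$. Thus $\det d\notin\frak{m}$, so $\det d$ is a unit of $\co$ and $d\in GL_k(\co)$. The condition on $d$ in the definition of $P_k(\frak{m}^l)$ is therefore automatic, and the two inclusions between the stabilizer and $P_k(\frak{m}^l)$ follow from Steps 1--3.

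There is no serious obstacle. The only points needing care are the finiteness argument in Step 1, which upgrades $\bar g(M)\subset M$ to $\bar g(M)=M$, and the use of $l\geq 1$ in Step 3 to pass from a unit modulo $\frak{m}^l$ to a unit of $\co$.
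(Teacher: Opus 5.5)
Your proof is correct and takes the same approach as the paper, which offers no argument beyond ``one can easily check''. Your block-matrix verification supplies exactly that check: reading off $b\equiv 0 \pmod{\frak{m}^l}$ from the last $k$ columns, upgrading $\bar g(M)\subset M$ to equality by finiteness, and noting that $d\in GL_k(\co)$ is forced by $\det g\equiv \det A\cdot\det d \pmod{\frak{m}^l}$ with $l\geq 1$.
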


\item The imbedding $\co \inj \FF$ induces the map
\begin{eqnarray}\label{E:Grass-int-field}
\alp\colon Gr_{k,n}(\co)\to Gr_{k,n}(\FF)
\end{eqnarray}
given by
$$\cp\mapsto \cp\otimes_\co\FF,$$
where $\cp$ is a direct summand of $\co^n$ of rank $k$.
\begin{lemma}\label{L:gr-int-field}
The map (\ref{E:Grass-int-field}) is bijective.
\end{lemma}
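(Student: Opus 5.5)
The plan is to write down an explicit inverse to $\alp$: send a $k$-dimensional subspace $E\subset\FF^n$ to $E\cap\co^n$. For this I need three facts. First, $E\cap \co^n$ is a direct summand of $\co^n$, free of rank $k$. Second, $\alp(E\cap\co^n)=E$. Third, $\alp(\cp)\cap\co^n=\cp$ for every $\cp\in Gr_{k,n}(\co)$.

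\emph{Surjectivity.} The standard lattice $\Lam=\co^n$ is a lattice in $V=\FF^n$ in the sense of Definition \ref{D:lattice}. Given $E$, choose a flag $V_1\subset\dots\subset V_n=\FF^n$ with $V_k=E$. By Theorem \ref{T:free-lattice}(ii) there is a basis $v_1,\dots,v_n$ of $\FF^n$ with $\co^n=\co v_1\oplus\dots\oplus\co v_n$ and $v_1,\dots,v_k$ a basis of $E$. By Remark \ref{Rem:basis-of-lattice},
$$\cp:=E\cap\co^n=\co v_1\oplus\dots\oplus\co v_k.$$
Setting $\cq=\co v_{k+1}\oplus\dots\oplus\co v_n$ gives $\co^n=\cp\oplus\cq$, so $\cp\in Gr_{k,n}(\co)$. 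Moreover $\cp\otimes_\co\FF$ is the $\FF$-span of $v_1,\dots,v_k$, which is $E$. Hence $\alp$ is onto.

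\emph{Injectivity.} Let $\cp\in Gr_{k,n}(\co)$ with complement $\cq$, and pick $\co$-bases $p_1,\dots,p_k$ of $\cp$ and $q_{k+1},\dots,q_n$ of $\cq$. Together these form an $\co$-basis of $\co^n$, so the matrix with these columns lies in $GL_n(\co)$ and they also form an $\FF$-basis of $\FF^n$. In particular $\cp\otimes_\co\FF\to\FF^n$ is injective, with image $E=\mathrm{span}_\FF(p_a)$.

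The key step is to show $E\cap\co^n=\cp$. Take $x\in E\cap\co^n$. Expanding in the $\co$-basis, $x=\sum a_ip_i+\sum b_jq_j$ with $a_i,b_j\in\co$. Since $x\in E$ and the $p_i,q_j$ are $\FF$-linearly independent, uniqueness of coordinates over $\FF$ forces $b_j=0$. Therefore $x\in\cp$; the reverse inclusion $\cp\subset E\cap\co^n$ is clear.

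It follows that $\cp$ is recovered from $\alp(\cp)$ as $\alp(\cp)\cap\co^n$, so $\alp$ is injective. The only point needing care is this identity, i.e. that $\cp$ is saturated in $\co^n$; the direct-summand hypothesis in the definition of $Gr_{k,n}(\co)$ is exactly what provides it. I do not expect any real obstacle here.
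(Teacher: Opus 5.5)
Your proof is correct and follows essentially the same route as the paper. Both construct the inverse $E\mapsto E\cap\co^n$ and check that both compositions are the identity. Your adapted-basis argument for $(\cp\otimes_\co\FF)\cap\co^n=\cp$ is the paper's reduction, via transitivity of $GL_n(\co)$, to the submodule spanned by the first $k$ standard vectors. You also justify, through Theorem \ref{T:free-lattice}(ii), that $E\cap\co^n$ is a rank-$k$ direct summand whose $\FF$-span is $E$, which the paper dismisses as obvious.
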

{\bf Proof.} Consider the map in the opposite direction
$$\beta\colon Gr_{k,n}(\FF)\to Gr_{k,n}(\co)$$
defined by $$L\mapsto L\cap \co^n.$$
Obviously, $\alp\circ \beta=Id.$ It remains to show that $\beta\circ\alp =Id$, namely
$$(\cp\otimes_\co \FF)\cap \co^n=\cp \mbox{ for any } \cp\in Gr_{k,n}(\co).$$
Since the group $GL_n(\co)$ acts transitively on $Gr_{k,n}(\co)$, we may assume that $\cp$ is spanned over $\co$ by the first $k$ elements of the
standard basis of $\co^n$. For this particular $\cp$ the statement is obvious. \qed

\end{paragraphlist}

\section{The sine transform.}\label{S:sine}
\begin{paragraphlist}
\item Let $V$ be an  $n$-dimensional vector space over $\FF$. Let $\Lam\subset V$ be a lattice. Let $vol_\Lam$ be the Lebesgue measure on $V$ normalized so that
$$vol_\Lam(\Lam)=1.$$

Let $L, M\subset V$ be two subspaces of complementary dimensions $i$ and $n-i$ respectively. Denote
$$\Lam_L=L\cap\Lam,\, \Lam_M=M\cap\Lam.$$

In \cite{alesker-non-arch}, formula (15.2), one defined the sine of the angle between $L$ and $M$ by
\begin{eqnarray}\label{D:sine}
s(L,M):=vol_\Lam(\Lam_L+\Lam_M).
\end{eqnarray}
Clearly, $0\leq s(L,M)\leq 1$.

\item Let us give another description of $s(L,M)$. Let $$p\colon \Lam\to \Lam/\Lam_L\oplus \Lam/\Lam_M$$
be the obvious homomorphism. Let $\nu$ be any non-zero Lebesgue measure on $V/L\oplus V/M$.
\begin{proposition}\label{P:sine-another}
One has $$s(L,M)=\frac{\nu(p(\Lam))}{\nu(\Lam/\Lam_L\oplus \Lam/\Lam_M)}.$$
\end{proposition}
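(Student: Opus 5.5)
The plan is to relate $p$ to the addition map $\Lam_L\oplus\Lam_M\to\Lam$ and to compare Haar measures via indices of lattices. First note that $L\cap M$ need not be zero. If $L\cap M\ne 0$, then $\Lam_L+\Lam_M$ lies in the proper subspace $L+M$, so $s(L,M)=0$. On the other side, the map $V\to V/L\oplus V/M$ has kernel $L\cap M\neq 0$, so its image is a proper subspace. Hence $p(\Lam)$ has $\nu$-measure zero, and both sides vanish.

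So assume $V=L\oplus M$. Then the map
$$\pi\colon V\to V/L\oplus V/M$$
is a linear isomorphism. It sends $L$ isomorphically onto $V/M$ and $M$ isomorphically onto $V/L$. By uniqueness of Haar measure up to scaling, the ratio $\nu(\pi(A))/vol_\Lam(A)$ is a constant $c>0$ for every compact open $A\subset V$. The key computation is to identify what $\pi$ does to $\Lam_L+\Lam_M$. Since $\Lam_L\subset L$, we have $\pi(\Lam_L)=0\oplus(\Lam_L+M)/M$, and similarly $\pi(\Lam_M)=(\Lam_M+L)/L\oplus 0$.

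Next, compare lattices in $V/M$. The lattice $(\Lam_L+M)/M$ sits inside $(\Lam+M)/M=\Lam/\Lam_M$, which is a lattice by Lemma \ref{L:sublattice-quotient}. Likewise $(\Lam_M+L)/L\subset\Lam/\Lam_L$. Both are compact open $\co$-submodules, so all indices below are finite.

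Now I exploit that $\pi(\Lam)=p(\Lam)$, together with the two identities
$$\nu(p(\Lam))=c\,vol_\Lam(\Lam)=c,\qquad \nu\big(\pi(\Lam_L+\Lam_M)\big)=c\, s(L,M).$$
This gives $s(L,M)=\nu(\pi(\Lam_L+\Lam_M))/\nu(p(\Lam))$, which is equivalent to the claimed formula exactly when
$$\nu\big(p(\Lam)\big)^2=\nu\big(\pi(\Lam_L)+\pi(\Lam_M)\big)\cdot\nu\big(\Lam/\Lam_L\oplus\Lam/\Lam_M\big).$$
The naive attempt to eliminate $c$ directly is circular, since it just re-expresses $\nu(p(\Lam))$ through $c$. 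To break the circularity, I compute everything in an adapted basis.

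By Theorem \ref{T:free-lattice}(ii), choose a basis $v_1,\dots,v_n$ of $\Lam$ with $v_1,\dots,v_i$ a basis of $L$, so $\Lam_L=\co v_1\oplus\dots\oplus\co v_i$ by Remark \ref{Rem:basis-of-lattice}. In these coordinates $M$ is the graph of a linear map $B\colon\FF^{n-i}\to\FF^i$, i.e. $M=\{(Bu,u)\}$. I expect $\Lam_M$ to be $\{(Bu,u): u\in\co^{n-i},\ Bu\in\co^i\}$. Every index is then an index of one explicit $\co$-module in another, computable from elementary divisors of $B$ (Smith normal form over the PID $\co$). Concretely:

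$s(L,M)$ equals $[\co^{n-i}:U]^{-1}$, where $U=\{u\in\co^{n-i}: Bu\in\co^i\}$.

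$p(\Lam)$ is the image of $(x,u)\mapsto (u \bmod L,\ x-Bu \bmod M)$.

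Both sides then reduce to products of $|b_j|$ over the elementary divisors $b_j$ of $B$ with $|b_j|>1$, and I would verify that they agree.

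The main obstacle is exactly this bookkeeping: keeping the normalization of $\nu$ consistent and checking that the identity in the display holds. I would handle it by using $GL(\Lam)$-invariance (both sides are invariant and the action is transitive on $L$ by Proposition \ref{P:transitivity}) together with Smith normal form to reduce to $B$ diagonal. The problem then splits into rank-one coordinate blocks, where it is a one-line check with a single scalar $b$.
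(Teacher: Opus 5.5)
Your proof is correct, but it takes a more computational route than the paper. The paper never picks coordinates. It shows that both $\Lam/(\Lam_L+\Lam_M)$ and $(\Lam/\Lam_L\oplus\Lam/\Lam_M)/p(\Lam)$ are isomorphic to $(\Lam/\Lam_L)/\bigl((\Lam_L+\Lam_M)/\Lam_L\bigr)$; the second isomorphism comes from the surjection $x\mapsto(x,0)\bmod p(\Lam)$, whose kernel is the image of $\Lam_M$. Hence both sides of the proposition are reciprocals of the same index. This also covers $L\cap M\neq 0$ at once, since then the common index is infinite and the closed subgroup $p(\Lam)$ is $\nu$-null.

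Your transfer through $\pi$ and your reduction to $\nu(p(\Lam))^2=\nu(\pi(\Lam_L)+\pi(\Lam_M))\,\nu(\Lam/\Lam_L\oplus\Lam/\Lam_M)$ are right, and the Smith normal form check does close it. The block-diagonal elements $\mathrm{diag}(P,Q^{-1})\in GL(\Lam)$ fix $L$ and act by $B\mapsto PBQ$, and for diagonal $B$ both sides equal $\prod_j\max(1,|b_j|)^{-1}$. When $i\neq n-i$, the leftover one-dimensional blocks (where $L$ or $M$ is the whole coordinate line) contribute $1$. Your route buys an explicit formula for $s(L,M)$ in terms of the elementary divisors of $B$; the paper's buys brevity and coordinate-freeness.

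However, the step you call circular is not. Let $N=[\Lam:\Lam_L+\Lam_M]=s(L,M)^{-1}$. Injectivity of $\pi$ gives $[p(\Lam):\pi(\Lam_L+\Lam_M)]=N$. Meanwhile $\pi(\Lam_L+\Lam_M)=(\Lam_L+\Lam_M)/\Lam_L\oplus(\Lam_L+\Lam_M)/\Lam_M$, which has index $N\cdot N$ in $\Lam/\Lam_L\oplus\Lam/\Lam_M$ by the third isomorphism theorem. So $[\Lam/\Lam_L\oplus\Lam/\Lam_M:p(\Lam)]=N$. That is your identity with no coordinates at all, and it is essentially the paper's argument.
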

Clearly, $s(L,M)$ is independent of $\nu$.
This proposition follows from the following more precise result.
\begin{proposition}
There is a groups isomorphism
\begin{eqnarray}\label{E:v0}
\frac{\Lam}{\Lam_L+ \Lam_M}\simeq \frac{\Lam/\Lam_L\oplus \Lam/\Lam_M}{p(\Lam)}.
\end{eqnarray}
\end{proposition}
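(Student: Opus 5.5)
We prove the isomorphism (\ref{E:v0}) by exhibiting an explicit surjective homomorphism and computing its kernel. Consider the homomorphism
$$\psi\colon \Lam/\Lam_L\oplus \Lam/\Lam_M\to \frac{\Lam}{\Lam_L+\Lam_M},\qquad (x \bmod \Lam_L,\, y\bmod \Lam_M)\mapsto (x-y)\bmod (\Lam_L+\Lam_M).$$
This is well defined: changing $x$ by an element of $\Lam_L$ or $y$ by an element of $\Lam_M$ changes $x-y$ by an element of $\Lam_L+\Lam_M$. It is clearly additive. It is surjective, since any $z\in\Lam$ is the image of $(z\bmod\Lam_L,0)$. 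The plan is to show that $\ker\psi=p(\Lam)$; the first isomorphism theorem then gives (\ref{E:v0}).

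The inclusion $p(\Lam)\subset\ker\psi$ is immediate: $p(z)=(z\bmod\Lam_L,\, z\bmod\Lam_M)$, and $\psi(p(z))=0$. For the reverse inclusion, suppose $\psi(x\bmod\Lam_L,\, y\bmod\Lam_M)=0$. Then $x-y=a+b$ with $a\in\Lam_L$ and $b\in\Lam_M$. Put $z:=x-a=y+b\in\Lam$. We have $z\equiv x \bmod \Lam_L$ and $z\equiv y\bmod\Lam_M$, so the element equals $p(z)$. This proves $\ker\psi=p(\Lam)$, and hence (\ref{E:v0}).

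To deduce Proposition \ref{P:sine-another}, we use the fact that $\Lam/\Lam_L$ is a lattice in $V/L$ and $\Lam/\Lam_M$ is a lattice in $V/M$ (Lemma \ref{L:sublattice-quotient}). Thus $p(\Lam)$ is an open compact subgroup of the compact group $\Lam/\Lam_L\oplus\Lam/\Lam_M$, and the measure ratio is $1/[\Lam/\Lam_L\oplus\Lam/\Lam_M : p(\Lam)]$. By (\ref{E:v0}) this index equals $[\Lam:\Lam_L+\Lam_M]$. In the nondegenerate case $L\cap M=0$, the set $\Lam_L+\Lam_M$ is open in $V$, so $s(L,M)=vol_\Lam(\Lam_L+\Lam_M)=1/[\Lam:\Lam_L+\Lam_M]$, and the two descriptions agree. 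In the degenerate case $L\cap M\neq 0$, the sum $\Lam_L+\Lam_M$ lies in a proper subspace and has measure zero. Correspondingly, $p(\Lam)$ has positive codimension in $\Lam/\Lam_L\oplus\Lam/\Lam_M$, since the map $V\to V/L\oplus V/M$ is not onto by dimension count, so both sides vanish.

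The algebra above is routine. The only point requiring care, which I expect to be the main obstacle, is this bookkeeping of openness and finite index in the degenerate case when passing from (\ref{E:v0}) to the measure statement.
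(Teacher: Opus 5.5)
Your proof is correct, and it organizes the argument differently from the paper. The paper identifies both sides with the asymmetric intermediate group $\frac{\Lam/\Lam_L}{Im[\Lam_M\to\Lam/\Lam_L]}$. It does this via the map $F\colon x\mapsto (x,0) \bmod p(\Lam)$ from $\Lam/\Lam_L$ onto the right-hand side, and then computes $\ker F$.

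You instead write down a single surjection $\psi\colon (x,y)\mapsto x-y$ from $\Lam/\Lam_L\oplus\Lam/\Lam_M$ onto $\Lam/(\Lam_L+\Lam_M)$ and show that its kernel is exactly $p(\Lam)$. Equivalently, you exhibit the exact sequence $\Lam\overset{p}{\to}\Lam/\Lam_L\oplus\Lam/\Lam_M\overset{\psi}{\to}\Lam/(\Lam_L+\Lam_M)\to 0$ directly.

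The two isomorphisms are mutually inverse. The paper's sends $z \bmod (\Lam_L+\Lam_M)$ to the class of $(z,0)$, and $\psi$ sends this back to $z$. Also $(x,y)-(x-y,0)=p(y)$ for any lift $y\in\Lam$, so the other composite is the identity as well. Your version gains symmetry in $L$ and $M$ and needs no intermediate quotient. It also checks surjectivity and both kernel inclusions explicitly, which the paper treats briefly.

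In your optional deduction of Proposition \ref{P:sine-another}, the sentence asserting that $p(\Lam)$ is an open compact subgroup is true only when $L\cap M=0$. In the degenerate case $p(\Lam)$ is compact but not open, and has infinite index. You handle that case correctly afterwards, so the openness claim should simply be moved inside the nondegenerate case.
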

{\bf Proof.} We will prove that both groups are isomorphic to
\begin{eqnarray}\label{E:v1}
\frac{\Lam/\Lam_L}{Im[\Lam_M\to \Lam/\Lam_L]}.
\end{eqnarray}
For the left hand side of (\ref{E:v0}) this is obvious. For the right hand side consider the homomorphism
$$F\colon \Lam/\Lam_L\to \frac{\Lam/\Lam_L\oplus \Lam/\Lam_M}{p(\Lam)}$$
given by $F(x)\equiv(x,0) \mbox{ mod }p(\Lam)$.
It is easy to see that $F$ is onto. Let us show that
$$Ker(F)=Im[\Lam_M\to \Lam/\Lam_L].$$
Let $x\in Ker (F)$. That means that $(x,0)\in p(\Lam)$. In other words there exists $\tilde y\in \Lam$ such that $x\equiv\tilde y\mbox{ mod } \Lam_L$ and $0\equiv\tilde y\mbox{ mod } \Lam_M$.
Hence $\tilde y\in \Lam_M$ and hence $x\in Im[\Lam_M\to \Lam/\Lam_L]$.
\qed

\item The sine transform\footnote{Sometimes it is also called cosine transform.} is a linear transformation
$$\cd_{n-i}\colon C^\infty(Gr_{i,n}(\FF))\to C^\infty(Gr_{n-i,n}(\FF))$$
defined by
$$(\cd_{n-i,n}f)(M)=\int_{L\in Gr_{i,n}(\FF)}s(L,M)f(L) dL,$$
where $dL$ is the $GL_n(\co)$-invariant Haar probability measure on $Gr_{i,n}$. The sine transform $\cd_{n-i}$ is $GL_n(\co)$-equivariant because obviously
\begin{eqnarray}\label{E:sine-invar}
s(gL,gM)=s(L,M) \mbox{ for any } g\in GL_n(\co).
\end{eqnarray}

\item Let us compute more explicitly $s(L,M)$ when $\dim L=1,\, \dim M=n-1$. Assume that $V=\FF^n$, $\Lam=\co^n$.
For $\bbx,\bby\in \FF^n$ denote
$$\langle \bbx,\bby\rangle_0=x_1y_1+\dots+x_ny_n.$$

1-dimensional subspace $L$ is spanned by a vector $$\bbx=(x_1,\dots,x_n)$$
such that $|\bbx|:=\max_{i}|x_i|=1$. Vector $\bbx$ is defined uniquely up to multiplication by an element from $\co^*$.

A hyperplane $M$ can be described by a vector $\bby=(y_1,\dots,y_n)$ as
$$M=\{\bbz\in \FF^n|\, \langle \bbz,\bby\rangle_0=0\}=:\bby^{\perp_0}.$$

We can and will choose $\bby$ so that $|\bby|=1$. Under this assumption, $\bby$ is unique up to a multiplication by an element from $\co^*$.

\begin{lemma}
Let $L=span\{\bbx\}$, $M=\bby^{\perp_0}$, and $|\bbx|=|\bby|=1$. Then
$$s(L,M)=|\langle \bbx,\bby\rangle_0|.$$
\end{lemma}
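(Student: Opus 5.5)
We use the $GL_n(\co)$-invariance (\ref{E:sine-invar}) to move to a normal form, and then read off the volume of $\Lam_L+\Lam_M$ from a single determinant.

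\textbf{Step 1: normalize $\bbx$.} Since $|\bbx|=1$, some coordinate is a unit, and $\bbx$ is a primitive vector of $\co^n$. By Theorem \ref{T:free-lattice}(ii), applied to a flag starting with $L$, there is $g\in GL_n(\co)$ with $g\bbx=\bbe_1$.

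\textbf{Step 2: check the right-hand side is invariant.} Replace $L$ by $gL$ and $M$ by $gM$. Then $gM=((g^{-1})^T\bby)^{\perp_0}$. The vector $\bby'=(g^{-1})^T\bby$ still satisfies $|\bby'|=1$, because $GL_n(\co)$ preserves the max-norm. Moreover
$$\langle g\bbx,(g^{-1})^T\bby\rangle_0=\langle \bbx,\bby\rangle_0.$$
Together with (\ref{E:sine-invar}), this reduces the lemma to the case $\bbx=\bbe_1$, where the claim becomes $s(L,M)=|y_1|$.

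\textbf{Step 3: the degenerate case $y_1=0$.} Then $L\subset M$, so $\Lam_L+\Lam_M\subset M$ has measure zero. Hence $s=0=|y_1|$.

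\textbf{Step 4: the case $y_1\neq 0$.} Here $L\oplus M=V$. The measure $vol_\Lam(\Lam_L+\Lam_M)$ equals $|\det T|$, where $T$ has as columns an $\co$-basis of $\Lam_L$ together with an $\co$-basis of $\Lam_M$. We may take $\bbe_1$ as the basis of $\Lam_L$.

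\begin{itemize}
\item \emph{Reducing to a block-triangular shape.} Apply a permutation matrix, which lies in $GL_n(\co)$, to arrange that some $y_j$ with $j\ge 2$ is a unit, while keeping $\bbe_1$ fixed. This is possible unless $|y_1|=1$ is the only unit coordinate.
\item \emph{The exceptional case.} If $y_1$ is the only unit coordinate, then $M$ is the graph of a map $\FF^{n-1}\to\FF$ with $\co$-integral coefficients. The lattice $\Lam_M$ has basis $\bbe_j-(y_j/y_1)\bbe_1$ for $j\ge2$, so $\det T=1=|y_1|$.
\item \emph{The main case.} Suppose $y_2$ is a unit. Then $M$ is the graph of $z_2=-y_2^{-1}(y_1z_1+\sum_{j\ge3}y_jz_j)$. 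A basis of $\Lam_M$ is given by $\bbe_1-(y_1/y_2)\bbe_2$ and $\bbe_j-(y_j/y_2)\bbe_2$ for $j\ge3$. These vectors lie in $\co^n$ and are saturated, since their free coordinates can be arbitrary elements of $\co$.
\item \emph{The determinant.} With $\bbe_1$ first, expanding gives $\det T=\pm y_1/y_2$. Hence $|\det T|=|y_1|=|\langle\bbx,\bby\rangle_0|$.
\end{itemize}

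The main point to check carefully is that these vectors form an $\co$-basis of $\Lam_M$, and not merely of a sublattice. This follows because every $\bbz\in M\cap\co^n$ is determined by its coordinates $z_1,z_3,\dots,z_n\in\co$ and equals the corresponding $\co$-combination of these vectors. We also use the normalization $vol_\Lam(\co^n)=1$, and the fact that the image of a lattice under $T$ has measure $|\det T|$, which follows from the scaling property of Lebesgue measure in Section \ref{S:local-fields}.
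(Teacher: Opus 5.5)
Your proof is correct and follows the same overall strategy as the paper: use the $GL_n(\co)$-invariance of both sides to reach a normal form, then compute directly. The difference is which vector you normalize.

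The paper moves $\bby$ rather than $\bbx$ to $\bbe_1$, via some $g$ with $(g^t)^{-1}\bby=\bbe_1$. Then $\Lam_M=\{0\}\times\co^{n-1}$ is a coordinate sublattice, and $\Lam_L=\co\bbx$ holds automatically because $|\bbx|=1$. Hence $\Lam_L+\Lam_M=x_1\co\times\co^{n-1}$ is a product set, whose volume $|x_1|$ follows from the one-dimensional scaling property alone. That choice removes several steps you needed:
\begin{itemize}
\item finding an $\co$-basis of $\Lam_M$ for a general hyperplane;
\item the case split on which coordinate of $\bby$ is a unit, and on whether $y_1=0$ (the degenerate case is just $x_1\co=\{0\}$);
\item any appeal to $vol_\Lam(T\co^n)=|\det T|$.
\end{itemize}

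On that last point, the $n\times n$ determinant formula does not literally follow from the one-dimensional scaling property in Section 2. You also need two facts. First, elements of $GL_n(\co)$ preserve $vol_\Lam$, by uniqueness of Haar measure and since they preserve $\co^n$. Second, $T$ must factor through such elements and a diagonal matrix. In your main case this does hold. Subtracting the first column from the second does not change $T\co^n$, and the resulting matrix equals $T''\cdot\mathrm{diag}(1,y_1,1,\dots,1)$ with $T''$ upper triangular, integral, and with unit diagonal, so $T''\in GL_n(\co)$. This gives $T\co^n=T''(\co\times y_1\co\times\co^{n-2})$, of volume $|y_1|$. So the step is sound, but it deserves this sentence of justification.
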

{\bf Proof.} For any invertible matrix $g\in GL_n(\FF)$ one has
$$\langle g\bbx,(g^t)^{-1}\bby\rangle_0=\langle \bbx,\bby\rangle_0,$$
where $g^t$ is the transposed matrix. This and (\ref{E:sine-invar}) imply that
it suffices to prove the lemma for $\bby=(1,0,\dots,0)$. In this case,
$$\Lam_M=\{0\}\times \co^{n-1},\, \Lam_L=\co\cdot\bbx.$$
Then we have
$$\Lam_L+\Lam_M=\co\cdot\bbx+\{0\}\times \co^{n-1}=|x_1|\cdot\co\times \co^{n-1}.$$
Then we obtain
$$vol_\Lam(|x_1|\cdot\co\times \co^{n-1})=|x_1|=|\langle \bbx,\bby\rangle_0|.$$
\qed

\end{paragraphlist}

\section{Background on valuations.}\label{S:valuations}
\begin{paragraphlist}
\item Consider a complex line bundle $\cl_i$ over the Grassmannian $Gr_{i,n}(\FF)$ whose fiber over a subspace $E\in Gr_{i,n}(\FF)$ is equal to the space $D(E)$ of $\CC$-valued Lebesgue measures.
The bundle is naturally equivariant under the group $GL_n(\FF)$ of all invertible linear transformations. Let us denote by $C^\infty(Gr_{i,n}(\FF),\cl_i)$ the space of so called smooth sections of $\cl_i$, i.e.,
sections fixed by an open subgroup of $GL_n(\FF)$. By a result \cite{gourevitch}, this space $C^\infty(Gr_{i,n}(\FF),\cl_i)$ has a unique non-zero $GL_n(\FF)$-irreducible subspace which we denote by $Val_i^\infty(\FF^n)$ and call it
the space of smooth valuations homogeneous of degree $i$. We define space of valuations
$$Val^\infty(\FF^n)=\oplus_{i=0}^n Val_i^\infty(\FF^n).$$
Note that $$Val^\infty_0(\FF^n)=\CC,\, Val^\infty_n(\FF^n)=D(\FF^n).$$

\item By \cite{alesker-bernstein}, Remark 2.2, the $GL_n(\FF)$-module $C^\infty(Gr_{i,n}(\FF),\cl_i)$ is irreducible for $i=0,1,n-1,n$, and has length 2 for $2\leq i\leq n-2$.
In particular,
\begin{eqnarray}\label{E:val-small-i}
Val_i^\infty(\FF^n)=C^\infty(Gr_{i,n}(\FF),\cl_i)\mbox{ for } i=0,1,n-1,n.
\end{eqnarray}

\item\label{i:def-valuations} The subgroup $GL_n(\co)\subset GL_n(\FF)$ coincides with the subgroup of transformations $g$ such that $g(\co^n)=\co^n$. The line bundle $\cl_i$ is $GL_n(\co)$-equivariantly isomorphic to the trivial bundle.
Indeed, since $GL_n(\co)$ acts transitively on $Gr_{i,n}(\FF)$, it suffices to construct a non-vanishing $GL_n(\co)$-invariant section of $\cl_i$.
Such a section assigns to each subspace $E\in Gr_{i,n}(\FF)$ the Lebesgue measure $\mu_E\in D(E)$ on $E$, normalized by
$$\mu_E(\co^n \cap E)=1.$$

This induces a $GL_n(\co)$-equivariant imbedding of valuations to smooth $\CC$-valued functions on the Grassmannian:
\begin{eqnarray}\label{E:imb-val-funct}
Val_i^\infty(\FF^n)\inj C^\infty(Gr_{i,n}(\FF)).
\end{eqnarray}
By (\ref{E:val-small-i}), this imbedding is an isomorphism for $i=0,1,n-1,n$.

Note that a function on $Gr_{i,n}(\FF)$ is called smooth if it is invariant under an open subgroup of $GL_n(\co)$.

In \cite{alesker-bernstein} the space $Val^\infty_i(\FF^n)$ was defined (in an equivalent language, see \cite{alesker-non-arch}, Claim 15.3) as the image of the sine transform
$$\cd_i\colon C^\infty(Gr_{n-i,n}(\FF))\to C^\infty(Gr_{i,n}(\FF)).$$

Thus,
$$Val_i^\infty(\FF^n)=Im(\cd_i).$$

\item $\cd_i$ is a $GL_n(\FF)$-equivariant transformation. Hence its image is $GL_n(\co)$-invariant subspace of $C^\infty(Gr_{i,n}(\FF))$.

The natural representation of $GL_n(\co)$ in the space of functions $C^\infty(Gr_{i,n}(\FF))$ is multiplicity free, for $char(\FF)=0$ it was proven first in
\cite{hill-94}, Corollary 3.2, and in general in \cite{bader-onn}.

It follows that the image of $\cd_i$ in $C^\infty(Gr_{i,n}(\FF))$, and hence $Val_i^\infty(\FF^n)$, is a (typically infinite) direct sum of certain irreducible representations of $GL_n(\co)$.

\item In \cite{alesker-non-arch}, the author defined a multiplicative structure on the space $Val^\infty(\FF^n)$, making it into a commutative associative algebra.
This algebra has a unit $1\in Val_0(\FF^n)=\CC$ and is graded:
$$Val^\infty_i(\FF^n)\cdot Val^\infty_j(\FF^n)\subset Val^\infty_{i+j}(\FF^n).$$
We will not discuss the general construction of the product here. However, we will need several specific results from \cite{alesker-non-arch} in Section \ref{S:form-val}, where they will be stated explicitly.

\end{paragraphlist}

\section{Explicit description of projective space over a local ring.}\label{S:expl-local-ring}
\begin{paragraphlist}
\item The material presented in this section is not new and is part of the folklore.

Let $(A,\rho)$ be a local ring. In this section, we present a more explicit description of the Grassmannian of lines
$Gr_{1,n}(A)$ which one denotes by $\PP^{n-1}(A)$ and calls a projective space.

Consider the subset of $A^n$ which we call a sphere
$$\SS^{n-1}(A)=\{(x_1,\dots, x_n)|\, \exists i=1,\dots,n\mbox{ s.t. } x_i\in A^*.\}.$$
\begin{lemma}\label{L:submodule1}
If $\bbv\in \SS^{n-1}(A)$ then $A\cdot \bbv\in \PP^{n-1}(A)$.
\end{lemma}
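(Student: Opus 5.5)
We must show that $A\cdot\bbv$ is a free $A$-submodule of rank $1$ of $A^n$ that admits a free complement of rank $n-1$, which is exactly the definition of a point of $Gr_{1,n}(A)=\PP^{n-1}(A)$ given in Section \ref{S:grassmannians.}. The plan is to complete $\bbv$ to a basis of $A^n$ by an explicit invertible matrix, in the spirit of the proof of Lemma \ref{E:maps-grassm-local}.

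By the definition of $\SS^{n-1}(A)$ there is an index $i$ with $v_i\in A^*$. After permuting coordinates, which is an automorphism of $A^n$ preserving the property of being a direct summand, we may assume $i=1$. Let $T$ be the $n\times n$ matrix whose first column is $\bbv$ and whose remaining columns are the standard vectors $e_2,\dots,e_n$. Then $T$ is lower triangular with $\det T=v_1\in A^*$, so $x\mapsto Tx$ is an automorphism of the $A$-module $A^n$ sending $e_1\mapsto\bbv$ and $e_j\mapsto e_j$ for $j\geq 2$.

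Applying this automorphism to the decomposition $A^n=Ae_1\oplus(Ae_2\oplus\dots\oplus Ae_n)$ gives
$$A^n=A\bbv\oplus \cq,\qquad \cq=Ae_2\oplus\dots\oplus Ae_n .$$
Here $A\bbv$ is the image of the free rank-one module $Ae_1$, so it is free of rank $1$; in particular, the map $a\mapsto a\bbv$ is injective because $av_1=0$ forces $a=0$. The complement $\cq$ is free of rank $n-1$. Hence $A\bbv\in\PP^{n-1}(A)$.

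There is no real obstacle here. The only point needing care is freeness of $A\bbv$, that is, that $\bbv$ has no annihilator. This is exactly where the hypothesis that some coordinate of $\bbv$ is a unit is used, and the determinant argument handles it directly.
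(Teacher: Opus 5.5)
Your proof is correct and follows the paper's argument: the paper also reduces to $v_1\in A^*$, takes the complement $\cq=\{(0,x_2,\dots,x_n)\}$, and asserts that $x\mapsto x\bbv$ is an isomorphism onto $A\cdot\bbv$ and that $A^n=A\cdot\bbv\oplus\cq$. Your lower-triangular matrix $T$ with $\det T=v_1\in A^*$ just supplies the justification for both claims, which the paper treats as clear.
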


{\bf Proof.} The $A$-submodule $A\cdot\bbv\subset A^n$ is free of rank 1. Indeed, the map $A\to A\cdot\bbv$ given by
$$x\mapsto x\cdot \bbv$$
is an isomorphism.

Let us assume for simplicity of the notation that $v_1\in A^*$. Set
$$\cq=\{(0,x_2,\dots,x_n)|, x_i\in A \mbox{ for } i=2,\dots, n.\}$$
Clearly, $\cq$ is isomorphic to $A^{n-1}$ and
$$A^n=A\cdot\bbv\oplus \cq.$$
Hence $A\cdot\bbv\in \PP^{n-1}(A)$. \qed

\begin{lemma}\label{L:submodule2}
\begin{enumerate}
\item\label{i:pr-sp1} The map $\SS^{n-1}(A)\to \PP^{n-1}(A)$ given by
\begin{eqnarray}\label{E:map-sphere-proj}
\bbv\mapsto A\cdot \bbv
\end{eqnarray}
is onto.

\item\label{i:pr-sp2} Two vectors $\bbv,\bbw\in \SS^{n-1}(A)$ have the same image under the map (\ref{E:map-sphere-proj}) if and only if there is  $\alp\in A^*$ such that
$$\bbw=\alp\cdot\bbv.$$
Such $\alp$ is necessarily unique.
\end{enumerate}
\end{lemma}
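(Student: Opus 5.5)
The plan is to prove both parts by working with a complement of the line and using the fact that an element of the local ring $A$ is invertible exactly when it lies outside $\rho$.

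\emph{Part \ref{i:pr-sp1} (surjectivity).} Let $\cp\in\PP^{n-1}(A)$, so $\cp=A\cdot\bbv$ is free of rank one and $A^n=\cp\oplus\cq$ for some $\cq$. I will show that $\bbv\in\SS^{n-1}(A)$. Suppose instead that every coordinate of $\bbv$ lies in $\rho$.
\begin{itemize}
\item Reduce modulo $\rho$ to get $\kappa^n=\bar\cp\oplus\bar\cq$, where $\bar\cp=\cp\otimes_A\kappa$ and $\bar\cq=\cq\otimes_A\kappa$.
\item By the Remark in Section \ref{S:grassmannians.}, $\cq$ is free of rank $n-1$. Hence $\dim_\kappa\bar\cq=n-1$, and so $\dim_\kappa\bar\cp=1$.
\item But $\bar\cp$ is spanned by the image of $\bbv$, which is $0$. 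This is a contradiction.
\end{itemize}
So some $v_i$ is a unit, i.e.\ $\bbv\in\SS^{n-1}(A)$, and $\cp$ is the image of $\bbv$. An alternative route is the splitting argument: choose a basis $\bbv$, $q_2,\dots,q_n$ adapted to $A^n=\cp\oplus\cq$. The matrix with these columns is invertible, so its determinant is a unit. Expanding along the first column shows that not all $v_i$ can lie in $\rho$.

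\emph{Part \ref{i:pr-sp2} (fibres).} The direction ``if'' is clear. If $\bbw=\alp\bbv$ with $\alp\in A^*$, then $A\bbw=A\bbv$.

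For the converse, assume $A\bbv=A\bbw$. Then $\bbw=\alp\bbv$ and $\bbv=\beta\bbw$ for some $\alp,\beta\in A$. This gives $\bbv=\beta\alp\bbv$. Pick an index $i$ with $v_i\in A^*$; comparing $i$-th coordinates gives $(1-\beta\alp)v_i=0$, hence $\beta\alp=1$, so $\alp\in A^*$.

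Uniqueness works the same way. If $\alp\bbv=\alp'\bbv$, then $(\alp-\alp')v_i=0$ forces $\alp=\alp'$.

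The only step with real content is the first one: showing that a generator of a direct-summand line must have a unit coordinate. The main obstacle is using the complement correctly, either by the dimension count over the residue field $\kappa$ or by the determinant argument. Everything else is a direct coordinate computation that uses a unit coordinate of $\bbv$.
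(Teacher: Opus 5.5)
Your proposal is correct and follows essentially the paper's argument. In part 1, the paper likewise reduces a basis $\bbv,\bm{\xi}_1,\dots,\bm{\xi}_{n-1}$ adapted to $A^n=\cp\oplus\cq$ modulo the maximal ideal to a basis of $\kappa^n$, which forces the image of $\bbv$ to be nonzero; in part 2, it compares a unit coordinate after normalizing $v_1=w_1=1$, much as you do with $(1-\beta\alp)v_i=0$, and you additionally spell out the uniqueness of $\alp$ that the paper leaves implicit.
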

{\bf Proof.} Part \ref{i:pr-sp1}. Assume that $$A^n=\cp\oplus \cq,$$
where $\cp\simeq A,\, \cq\simeq A^{n-1}$. Let $\bbv\in \cp$ be a basis of $\cp$.  Let us prove that $\bbv\in \SS^{n-1}(A)$.
Choose a basis $\bm{\xi}_1,\dots,\bm{\xi}_{n-1}$ of $\cq$. Then
$$\bbv, \bm{\xi}_1,\dots,\bm{\xi}_{n-1}$$
is a basis of $A^{n}$. Let us denote their images in $A^n\otimes_A A/\frak{p}=\kappa^n$ by
$$\tilde\bbv, \tilde{\bm{\xi}}_1,\dots,\tilde{\bm{\xi}}_{n-1}.$$
The latter elements form a basis of $\kappa^n$. In particular, $\tilde\bbv\ne 0$ in $\kappa^n$. That means that
$$\bbv\in \SS^{n-1}(A).$$

Part \ref{i:pr-sp2}. Assume that $A\cdot\bbv=A\cdot\bbw$ where $\bbv,\bbw\in \SS^{n-1}(A)$. We may assume without loss of generality that
$v_1\in A^*$. Then necessarily $w_1\in A^*$. Multiplying $\bbv$ and $\bbw$ by elements from $A^*$ we may assume that
$$v_1=w_1=1.$$
Let us prove that $\bbv=\bbw$. We have
$$A\cdot(1,v_2,\dots,v_n)=A\cdot(1,w_2,\dots,w_n).$$
In particular $$(1,v_2,\dots,v_n)=\gamma(1,w_2,\dots,w_n),$$
where $\gamma\in A$. Hence $\gamma=1$. Hence $\bbv=\bbw$. \qed

\item Let now $A=\co$ be the ring of integers in a non-Archimedean local field $\FF$. By Lemma \ref{L:gr-int-field}, the natural map
$$\PP^{n-1}(\co)\to \PP^{n-1}(\FF)$$
given by $\cp\mapsto \cp\otimes_\co\FF$ is a $GL_n(\co)$-equivariant bijection. Below we will identify these two spaces via this map.
This spaces will be denoted by $\PP^{n-1}$.

The sphere $\SS^{n-1}(\co)$ will be denoted by $\SS^{n-1}$. Obviously, it can be described as follows:
\begin{eqnarray}\label{E:sphere-non-a}
\SS^{n-1}=\{\bbx\in \FF^n|\, |\bbx|=1\}.
\end{eqnarray}
The natural action of the group $GL_n(\co)$ in $\SS^{n-1}$ is transitive.

\item By Lemma \ref{L:submodule2}, we have the natural $GL_n(\co)$-equivariant map onto
$$\SS^{n-1}\to \PP^{n-1}.$$
Clearly the push-forward of the $GL_n(\co)$-invariant probability Haar measure on $\SS^{n-1}$ under this map is equal to the $GL_n(\co)$-invariant probability Haar measure on $\PP^{n-1}$.
Hence integral of any function on $\PP^{n-1}$ with respect to this Haar measure is equal to the integral of its pull-back to the sphere $\SS^{n-1}$ with respect to the corresponding Haar measure.
We will use this property in computations.

Hence, let us describe explicitly the Haar measure on $\SS^{n-1}$.  By (\ref{E:sphere-non-a}), $\SS^{n-1}$ is an open compact subset of $\FF^n$. Let $d^nx$ denote the Lebesgue measure on $\FF^n$.
It is preserved by the group $GL_n(\co)$. Hence its restriction to $\SS^{n-1}$ is non-zero and invariant. Despite that $d^nx$ is not a probability measure (the total measure of the sphere equals $1-q^{-n}$),
 we will use it in computations.

\end{paragraphlist}

\section{Harmonic analysis on projective space.}\label{S:harmonic-projective}
\begin{paragraphlist}
\item  In this section, we review some results due to P. Humphries \cite{humphries}. Let us consider the space $\FF^n$ and the lattice $\co^n$. We denote the Grassmannian $Gr_{1,n}(\FF^n)$, which is a projective space, by $\PP^{n-1}$.

The group $G:=GL_n(\co)$ acts on $\PP^{n-1}$. Let us denote by
$$P=Stab_{GL_n(\co)}(\bbe_n)=\left\{\left[\begin{array}{c|c}
                                      A&0\\
                                      \hline
                                      c&d
                                   \end{array}\right]\in GL_n(\co)\right\}$$
the stabilizer of the last vector $\bbe_n$ of the standard basis. Here $A\in GL_{n-1}(\co),\, d\in \co^*$.
Then $$\PP^{n-1}=G/P.$$

\item The action of $G$ on $\PP^{n-1}$ commutes with the maps (\ref{E:grassmannians-seq}), and induces a sequence of $G$-equivariant imbeddings
$$\CC\inj C(\PP^{n-1}(\co/\frak{m}))\inj C(\PP^{n-1}(\co/\frak{m}^2)\inj C(\PP^{n-1}(\co/\frak{m}^3)\inj \dots,$$
where all spaces are finite dimensional.

Moreover, there is a unique $G$-invariant probability measure on $\PP^{n-1}$. It defines a $G$-invariant scalar product on $C^\infty(\PP^{n-1})$.

 Let us define $\rho_0\simeq \CC$ to be the subspace of constant functions on $\PP^{n-1}$. For $m\geq 1$, define
$$\rho_m:=C(\PP^{n-1}(\co/\frak{m}^m)\ominus C(\PP^{n-1}(\co/\frak{m}^{m-1}).$$
Hence we have a decomposition to an infinite orthogonal direct sum of $G$-invariant subspaces
$$C^\infty(\PP^{n-1})=\oplus_{m=0}^\infty \rho_m.$$
By \cite{humphries}, Theorem 2.16, the $G$-modules $\rho_m$ are irreducible and pairwise non-isomorphic. Moreover, all $\rho_m$ have pairwise different dimensions (\cite{humphries}, Lemma 2.15).

\item Since $\PP^{n-1}=G/P$ is a multiplicity free $G$-module, every $\rho_m,\, m\geq 0,$ has a unique, up to a proportionality, $P$-invariant vector $P_m$.
We will need its explicit construction due to Humphries \cite{humphries}, Proposition 3.4. It says that
\begin{eqnarray*}
P_0=1,\\
P_1(x_1,\dots,x_n)=\left\{\begin{array}{ccc}
                          1&\mbox{ if }&\max\{|x_1|,\dots,|x_{n-1}|\}\leq q^{-1},\\
                          -\frac{q-1}{q(q^{n-1}-1)}&\mbox{ if }&\max\{|x_1|,\dots,|x_{n-1}|\}=1.
                          \end{array}\right.
\end{eqnarray*}
and for $m\geq 2$
\begin{eqnarray*}
P_m(x_1,\dots,x_n)=\left\{\begin{array}{ccc}
                          1&\mbox{ if }&\max\{|x_1|,\dots,|x_{n-1}|\}\leq q^{-m},\\
                          -\frac{1}{q^{n-1}-1}&\mbox{ if }&\max\{|x_1|,\dots,|x_{n-1}|\}=q^{-m+1},\\
                          0& &\mbox{ otherwise},
                          \end{array}\right.
\end{eqnarray*}
where we assume that $\max_{1\leq i\leq n}|x_i|=1$.

\end{paragraphlist}

\section{Harmonic analysis on dual projective space.}\label{S:dual-proj-space}
\begin{paragraphlist}
\item
As in Section \ref{S:harmonic-projective}, $\PP^{n-1}=G/P$ denotes the projective space of lines in $\FF^n$. Let $\PP^\vee$ be the dual projective space of hyperplanes in $\FF^n$.
Then $$\PP^{\vee} =G/\bar P,$$
where $\bar P$ is the transposed of $P$. We will identify
$$\PP^\vee \tilde\to \PP^{n-1}$$
using the annihilator with respect to bilinear form $\langle\cdot,\cdot\rangle_0$, namely a hyperplane $H$ is identified with the line in $\FF^n$
$$H^{\perp_0}:=\{\bbx\in\FF^n|\, \langle\bbx,\bby\rangle_0=0 \mbox{ for all } \bby\in H\}.$$

In order this identification will be $G$-equivariant, one has to change the action of $G$ on $\PP^{n-1}$ to the following one:
\begin{eqnarray}\label{E:G-action}
(g,l)\mapsto (g^t)^{-1}l.
\end{eqnarray}
We will denote by $\bar\PP^{n-1}$ the space $\PP^{n-1}$ acted by $G$ in this modified way.

\item The Radon transform $$R\colon C^\infty(\bar\PP^{n-1})\to C^\infty(\PP^{n-1})$$
is a $G$-equivariant isomorphism by \cite{?}. The inverse image
$$\bar\rho_m:=R^{-1}(\rho_m)$$
 is isomorphic to $\rho_m$ via $R$. Hence
$$C^\infty(\bar \PP^{n-1})=\oplus_{m=0}^\infty \bar\rho_m,\, \bar\rho_m\simeq \rho_m.$$
By \cite{humphries}, Lemma 2.15, all $\rho_m$ have different dimensions for different $m\geq 0$. Hence, $\bar\rho_m$ has to coincide,
as a subspace of functions on $\PP^{n-1}$, with $\rho_m$.  Let us summarize this as follows.
\begin{lemma}
The $G$-representations $\rho_m$ and $\bar\rho_m$ are isomorphic for any $m\geq 0$. An isomorphism is given by the Radon transform $R$.
\end{lemma}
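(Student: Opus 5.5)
The lemma has two assertions. The first is that $\rho_m\simeq\bar\rho_m$ via $R$. The second, which carries the content, is that $\bar\rho_m=\rho_m$ as subspaces of $C^\infty(\PP^{n-1})$. The plan has three steps.

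\textbf{Step 1: $R$ restricts to an isomorphism of each piece.} Since $R$ is a $G$-equivariant linear isomorphism, the preimage $\bar\rho_m=R^{-1}(\rho_m)$ is a $G$-invariant subspace of $C^\infty(\bar\PP^{n-1})$. The restriction $R|_{\bar\rho_m}\colon\bar\rho_m\to\rho_m$ is then an equivariant bijection. Moreover, because $R$ is bijective and $C^\infty(\PP^{n-1})=\oplus_m\rho_m$, we obtain
$$C^\infty(\bar\PP^{n-1})=\oplus_m\bar\rho_m.$$
This proves the first assertion.

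\textbf{Step 2: the same decomposition holds for the modified action.} The underlying function space of $\bar\PP^{n-1}$ is $C^\infty(\PP^{n-1})$, with $G$ acting through the automorphism $\sigma(g)=(g^t)^{-1}$ of $G=GL_n(\co)$. The automorphism $\sigma$ preserves the congruence subgroups $\ker(GL_n(\co)\to GL_n(\co/\frak{m}^l))$. It follows that the filtration by the subspaces $C(\PP^{n-1}(\co/\frak{m}^l))$, namely functions invariant under the $l$-th principal congruence subgroup, is preserved under the twisted action. Twisting an irreducible representation by an automorphism keeps it irreducible and keeps its dimension. Hence, for the twisted action, $C^\infty(\bar\PP^{n-1})=\oplus_m\rho_m^\sigma$ is again a decomposition into pairwise non-isomorphic irreducibles, and $\dim\rho_m^\sigma=\dim\rho_m$.

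\textbf{Step 3: matching $\bar\rho_m$ with $\rho_m$.} Each $\bar\rho_m$ is irreducible (being isomorphic to $\rho_m$) and sits inside the multiplicity-free module $\oplus_k\rho_k^\sigma$. By Schur's lemma, an irreducible submodule of a multiplicity-free direct sum equals one of the summands. So $\bar\rho_m=\rho_k^\sigma$ for some $k$, and comparing dimensions gives
$$\dim\rho_k=\dim\rho_k^\sigma=\dim\bar\rho_m=\dim\rho_m.$$
By \cite{humphries}, Lemma 2.15, the dimensions $\dim\rho_k$ are pairwise distinct, so $k=m$. Thus $\bar\rho_m=\rho_m^\sigma$, which as a subspace of functions is exactly $\rho_m$.

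\textbf{Main obstacle.} The delicate point is Step 3: distinct dimensions pin down the isomorphism class, but we must also know that the isotypic component is a single summand. This is why multiplicity-freeness is needed, and it comes from the decomposition established in Step 2, not from the pre-twist one.
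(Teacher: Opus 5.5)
Your proof is correct and follows the paper's argument. In both, $\bar\rho_m=R^{-1}(\rho_m)$ is isomorphic to $\rho_m$ via the equivariant isomorphism $R$, and Humphries' pairwise distinct dimensions then force $\bar\rho_m=\rho_m$ as subspaces. Your Steps 2--3 usefully make explicit what the paper leaves implicit: under the twisted action, the function space has the same multiplicity-free decomposition into the same subspaces. This also follows at once from the fact that $\sigma$ is an automorphism of $G$, so every subspace invariant under the original action is invariant under the twisted one.
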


\item Let us construct $P$-invariant functions $\bar P_m\in \bar\rho_m\subset C^\infty(\bar\PP)$ similar to Humphries' functions $P_m\in C^\infty(\PP)$.

Note that $P_m\in C(P\backslash G/P)$ and $\bar P_m\in C(\bar P\backslash G/P)$ when the actions of $G$ on $P$ and $\bar P$ are the standard ones.

\begin{proposition}\label{bar-P-m}
Let $\bbx=(x_1,\dots,x_n)\in\SS^{n-1}$. Then $\bar P_0=1$, and for $m\geq 1$ one has
$$\bar P_m(\bbx)=\left\{\begin{array}{ccc}
                      1&\mbox{ if }& |x_n|\leq q^{-m},\\
                      -\alp_{n,m}&\mbox{ if }& |x_n|=q^{-(m-1)}\\
                      0&\mbox{ otherwise }&
                        \end{array}\right. ,$$
                        where $$\alp_{n,m}=\left\{\begin{array}{ccc}
                               \frac{1-q^{-(n-1)}}{q-1}& \mbox{ if }& m=1,\\
                               \frac{1}{q-1}&\mbox{ if }& m\geq 2.
                              \end{array}\right. .$$
\end{proposition}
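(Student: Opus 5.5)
The plan is to characterize $\bar P_m$ by three properties. First, it is invariant under the twisted action (\ref{E:G-action}) of $P$. Second, it lies in $\bar\rho_m$, which coincides with $\rho_m$ as a subspace of functions on $\PP^{n-1}$ by the Lemma above. Third, it is normalized by the value $1$ near the relevant point. Multiplicity-freeness then says the invariant vector is unique up to scalar, so the normalized function satisfying these properties must be $\bar P_m$.

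\textbf{Step 1 (invariance).} For $g\in P$, the matrix $(g^t)^{-1}$ ranges over the transposed group $\bar P$ of block upper triangular matrices $\left[\begin{array}{c|c} A&b\\ \hline 0&d\end{array}\right]$ with $A\in GL_{n-1}(\co)$, $b\in\co^{n-1}$, $d\in\co^*$. Such a matrix sends $\bbx=(\bbx',x_n)$ to $(A\bbx'+bx_n,\,dx_n)$.

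I will show that the orbits of this group on $\SS^{n-1}$ are exactly the level sets of $|x_n|$. Clearly $|x_n|$ is preserved.
\begin{enumerate}
\item If $|x_n|=1$, choose $b$ to kill $\bbx'$, so that we reach $\bbe_n$.
\item If $|x_n|<1$, then $|\bbx'|=1$. Since $GL_{n-1}(\co)$ acts transitively on $\SS^{n-2}$, we can move $\bbx'$ to $\bbe_1$, and we can normalize $x_n$ to $\varpi^j$ using $d$.
\end{enumerate}
Hence a $\bar P$-invariant function on $\PP^{n-1}$ is a function $f(|x_n|)$, and it suffices to determine the values $a_j=f(q^{-j})$.

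\textbf{Step 2 (membership in $\rho_m$).} First, $f\in C(\PP^{n-1}(\co/\frak{m}^m))$ means that $f$ is constant on fibers of reduction mod $\frak{m}^m$. This forces $a_j$ to be constant for $j\geq m$; call this constant $1$.

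Second, $f$ must be orthogonal to $C(\PP^{n-1}(\co/\frak{m}^{m-1}))$, that is, its average over every fiber of $\theta_{m-1}$ must vanish. I will split the fibers into two types.
\begin{enumerate}
\item A fiber over a point whose last coordinate is nonzero mod $\frak{m}^{m-1}$ has constant $|x_n|\geq q^{-(m-2)}$. The vanishing of the average then gives $a_j=0$ for $j\leq m-2$.
\item On the fibers with $x_n\equiv 0$ mod $\frak{m}^{m-1}$, the set $\{|x_n|\le q^{-m}\}$ occupies relative measure $1/q$, and the set $\{|x_n|=q^{-(m-1)}\}$ occupies relative measure $(q-1)/q$. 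For $m\geq2$ this is a statement about the Lebesgue measure of $\varpi^{m-1}\co$ versus $\varpi^{m}\co$, computed on the sphere where some other coordinate is a unit. The vanishing average yields $\frac1q-\alp\frac{q-1}q=0$, so $\alp=\frac1{q-1}$.
\end{enumerate}

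\textbf{Step 3 (the case $m=1$).} The only fiber is all of $\PP^{n-1}$, so the condition is orthogonality to constants. Using the measure $d^nx$ on $\SS^{n-1}$, as in Section \ref{S:expl-local-ring}:
\begin{enumerate}
\item the set $\{|x_n|=1\}$ has measure $1-q^{-1}$;
\item the set $\{|x_n|\le q^{-1},\ |\bbx'|=1\}$ has measure $q^{-1}(1-q^{-(n-1)})$.
\end{enumerate}
Setting the integral to zero gives $\alp_{n,1}=\frac{1-q^{-(n-1)}}{q-1}$.

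\textbf{Conclusion and main obstacle.} Since the resulting nonzero function lies in $\rho_m=\bar\rho_m$ and is $\bar P$-invariant, multiplicity-freeness identifies it with $\bar P_m$. The only delicate point is Step 2: one must correctly describe the fibers of $\theta_{m-1}$ in terms of the coordinates of a sphere representative, and in particular check that on the fiber over $x_n\equiv0$ the conditional measure is a product measure in $x_n$. I would handle this by fixing a unit coordinate $x_i$ with $i<n$ and normalizing $x_i=1$, which makes the fiber an explicit coset of a congruence box.
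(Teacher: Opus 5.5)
Your proposal is correct, and its computational core matches the paper's. The global integral over $\SS^{n-1}$ that fixes $\alp_{n,1}$ is the paper's Step 2 for $m=1$. For $m\ge 2$, the fact that on the region $x_n\equiv 0 \bmod \frak{m}^{m-1}$ only $x_n\in\pi^{m-1}\co$ matters, with $\pi^m\co$ of relative measure $q^{-1}$, is the paper's Step 3.

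The difference is one of direction. The paper takes the stated function, notes that it is $P$-invariant, and checks orthogonality to $C(\PP^{n-1}(\co/\frak{m}^{m-1}))$ by Fubini on $\SS^{n-2}\times\pi^{m-1}\co$. That needs no affine chart or choice of unit coordinate, and the fibers with $x_n\not\equiv 0$ never arise because $\bar P_m$ vanishes there. You instead classify the $\bar P$-orbits, write a general invariant function as $f(|x_n|)$, and solve the fiber-average conditions as if-and-only-if constraints. Your route shows directly that the $\bar P$-invariants in $\rho_m$ form a one-dimensional space, so your final appeal to multiplicity-freeness is redundant; the paper's route is shorter.

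Two small points to tidy up:
\begin{itemize}
\item Your Step 1 omits the orbit $\{x_n=0\}$, where you cannot normalize $x_n$ to $\varpi^j$. This is harmless, since that orbit lies in the set where $f$ is forced to be constant.
\item ``Call this constant $1$'' assumes the constant is nonzero. This holds, because if it vanished your remaining equations would force $f=0$.
\end{itemize}
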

{\bf Proof.}  \underline{Step 1.} It is clear that $\bar P_m$ are $P$-invariant (for the action \ref{E:G-action}).

\underline{Step 2.} Let us prove that $\bar P_m$ with $m\geq 1$ are orthogonal to $P_0=1$, i.e.,
\begin{eqnarray}\label{E:integ-vanish}
\int_{\SS^{n-1}}P_m(\bbx) d^nx=0.
\end{eqnarray}

For $m\geq 2$ one has
\begin{eqnarray*}
\int_{\SS^{n-1}}\bar P_m(\bbx) d^nx=\int_{|x_n|\leq q^{-(m-1)}}\bar P_m(\bbx)d^nx=\\
vol_{n-1}(\SS^{n-2})vol_1(\pi^m\co)-\frac{1}{q-1} vol_{n-1}(\SS^{n-2})vol_1(\pi^{m-1}\co^*)=\\
vol_{n-1}(\SS^{n-2})\left(q^{-m}-q^{-(m-1)}\frac{1-q^{-1}}{q-1}\right)=0.
\end{eqnarray*}

For $m=1$ one has
\begin{eqnarray*}
\int_{\SS^{n-1}}\bar P_1(\bbx) d^nx=\\
vol_{n-1}(\SS^{n-2})\cdot vol_1(\pi \co)-\frac{1-q^{-(n-1)}}{q-1} vol_{n-1}(\co^{n-1})\cdot vol_1(\co^*)=\\
(1-q^{-(n-1)})q^{-1}-\frac{1-q^{-(n-1)}}{q-1}  (1-q^{-1})=0.
\end{eqnarray*}

\underline{Step 3.} Let $$\theta_{m-1}\colon\bar \PP^{n-1}\to \bar\PP^{n-1}(\co/\frak{m}^{m-1})$$
be the natural map (\ref{E:map-theta}). It is $G$-equivariant, the target is a finite set. The pull-back induced by this map is  a $G$-equivariant map on function spaces
\begin{eqnarray}\label{E:pull-back-finite}
C(\bar\PP^{n-1}(\co/\frak{m}^{m-1}))\to C^\infty(\bar \PP^{n-1}).
\end{eqnarray}
In order to show that $\bar P_m\in \bar\rho_m\subset C^\infty(\bar\PP^{n-1})$
it suffices to show that $\bar P_m$ is orthogonal to the image of the map (\ref{E:pull-back-finite}). For $m=1$ this is just (\ref{E:integ-vanish}).

Let $m\geq 2$. The pull-back of any function $f\in C(\bar\PP^{n-1}(\co/\frak{m}^{m-1}))$ depends only on the class of $(x_1:\dots: x_n)$ modulo $\frak{m}^{m-1}$. In particular, it is constant on any subset of the form
$$\{(a_1:\dots :a_{n-1}:x_n)|\, x_n\in \pi^{m-1}\co\},$$
where $a_1,\dots,a_{n-1}$ are fixed.

But $\bar P_m(\bbx)=\bar P_m(|x_n|)$ is supported on the set
$$ \{(x_1:\dots :x_{n-1}:x_n)|\, x_n\in \pi^{m-1}\co\}=\SS^{n-2}\times \pi^{m-1}\co.$$
Hence
$$\int_{\SS^{n-1}}f\cdot \bar P_m d^nx=\int_{\SS^{n-2}}dx_1\dots dx_{n-1}\int_{\pi^{m-1}\co}dx_n f(\bbx)\cdot \bar P_m(|x_n|) \overset{(\ref{E:integ-vanish})}{=}0.$$
\qed

\end{paragraphlist}

\section{Hodge-Riemann relations and harmonic analysis on Grassmannians.}\label{S:HR-grassmannians}\label{S:form-val}
\begin{paragraphlist}
\item In this section, we reformulate the Hodge-Riemann relations in terms of analysis on Grassmannians in a way similar to Kotrbat\'y's approach \cite{kotrbaty-even} for even valuations on convex sets.

\item Let us fix a lattice $\Lam\subset V$. For any subspace $M\subset V$ denote by $vol_M$ the Lebesgue measure on $M$ such that $vol_M(M\cap \Lam)=1$.
 This induces a trivialization of the vector bundle $\cl_i$ over $Gr_i^V$: indeed the fiber $\cl_i|_M=D(M)=\CC\cdot vol_M$. Thus we will identify the space $Val_i^\infty(V)$
 with a subspace of locally constant functions on $Gr_i^V$.

  Consider valuations $\phi\in Val_i^\infty(V),\psi\in Val_{n-i}^\infty(V)$ given by
\begin{eqnarray}\label{E:phi-val}
\phi=\int_{Gr_{n-i}^{V}}\hat f(L)p^*_L(\mu_{L}) dL,\\\label{E:phi-val2}
\psi=\int_{Gr_{i}^{V}}\hat g(M)p^*_M(\mu_{M}) dM,
\end{eqnarray}
where the integrations are with respect to the $GL(\Lam)$-invariant Haar probability measures on the Grassmannians, $p_L\colon V\to V/L$ is the canonical quotient map,
and $\mu_{L}$ is the Lebesgue measure on $L$ normalized so that $\mu_{L}(p_L(\Lam))=1$, and similarly for $p_M, \mu_{M}$.
In other words
\begin{eqnarray}\label{E:val-via-sine}
\phi=\cd_{i}\hat f,\\
\psi=\cd_{n-i} \hat g.
\end{eqnarray}

\begin{lemma}\label{L:producvt-two-val}
$$\frac{\phi\cdot \psi}{vol_\Lam}=\int_{Gr_{n-i}^{V}}\int_{Gr_{i}^{V}}\hat  f(L)\hat g(M)s(L,M) dL\cdot dM=(\bar{\hat f}, \cd_{n-i}\hat g)=(\cd_i\bar{\hat f},\hat g),$$
where $vol_\Lam$ is the Lebesgue measure on $V$ normalized so that $vol_\Lam(\Lam)=1$, and $(\cdot,\cdot)$ denotes the Hermitian product on $L^2$ functions on corresponding Grassmannians.
\end{lemma}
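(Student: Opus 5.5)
The plan is to reduce the statement to the product formula for Crofton-type valuations from \cite{alesker-non-arch}. That formula expresses the product of two valuations of complementary degree $i$ and $n-i$, each given as an integral of pulled-back Lebesgue measures, as an integral over pairs of subspaces of the product of densities times the volume of a parallelotope-type set. The remaining identities then follow by rewriting that integral in terms of the sine transform.

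\textbf{Step 1: bilinearity.} By (\ref{E:phi-val}), (\ref{E:phi-val2}) and the continuity and bilinearity of the product, it suffices to compute the elementary products
$$p_L^*(\mu_L)\cdot p_M^*(\mu_M)$$
for $L\in Gr^V_{n-i}$ and $M\in Gr^V_i$, and then integrate against $\hat f(L)\hat g(M)\,dL\,dM$. Strictly, one should justify interchanging the product with the integrals. I would do this by noting that $\hat f$ and $\hat g$ are locally constant on compact Grassmannians. The integrals are therefore finite sums of integrals over small open sets, and each such integral is a valuation of the type to which the product formula in \cite{alesker-non-arch} applies directly.

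\textbf{Step 2: the elementary product.} I would invoke the formula from \cite{alesker-non-arch}: for transversal $L$ and $M$,
$$p_L^*(\mu_L)\cdot p_M^*(\mu_M)=(p_L\oplus p_M)^*(\mu_L\boxtimes\mu_M),$$
where this is the measure on $V$ obtained by pulling back along the isomorphism $V\to V/L\oplus V/M$. The non-transversal pairs form a measure-zero set, so they can be ignored. Evaluating this measure on $\Lam$ gives
$$\frac{(\mu_L\boxtimes\mu_M)(p(\Lam))}{(\mu_L\boxtimes\mu_M)(\Lam/\Lam_L\oplus\Lam/\Lam_M)},$$
because $\mu_L$ and $\mu_M$ are normalized on the images of $\Lam$. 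By Proposition \ref{P:sine-another} this ratio equals $s(L,M)$. Hence
$$p_L^*(\mu_L)\cdot p_M^*(\mu_M)=s(L,M)\,vol_\Lam,$$
and the first equality of the lemma follows.

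\textbf{Step 3: rewriting via the sine transform.} The double integral is rewritten by Fubini. Integrating first in $L$ gives $\int \hat g(M)\,(\cd_{n-i}\hat f)(M)\,dM$. In the other order, and matching the convention $(\cd_{n-i}\hat g)(L)=\int s(M,L)\hat g(M)\,dM$, it equals $(\bar{\hat f},\cd_{n-i}\hat g)$ and likewise $(\cd_i\bar{\hat f},\hat g)$, using the Hermitian product $(u,v)=\int \bar u v$. This step needs the symmetry $s(L,M)=s(M,L)$, which is immediate from the definition (\ref{D:sine}), and the reality of $s$.

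\textbf{Main obstacle.} The main obstacle is Step 2. It requires recalling exactly how \cite{alesker-non-arch} normalizes the product, and in particular whether the exterior product is taken with respect to the identification $V\simeq V/L\oplus V/M$ with no extra constant. If the normalization differs, an overall positive constant appears, which would be harmless for the Hodge--Riemann application. I would first try to match the quoted definition; failing that, I would compute the product directly in a basis adapted to $\Lam$ using Theorem \ref{T:free-lattice}.
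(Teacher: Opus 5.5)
Your proposal is correct and follows the paper's proof. The paper cites formula (14.1) of \cite{alesker-non-arch}, which already writes $\phi\cdot\psi$ as the double integral of $\hat f(L)\hat g(M)(p_L\times p_M)^*(\mu_L\boxtimes\mu_M)$, so your Step 1 interchange is built in; it then identifies that measure with $s(L,M)\,vol_\Lam$ via Proposition \ref{P:sine-another}, leaving your Step 3 Fubini rewriting implicit. One small index slip: integrating first in $L$ gives $(\cd_i\hat f)(M)$, not $(\cd_{n-i}\hat f)(M)$, because in the paper's convention $\cd_i$ maps functions on $Gr_{n-i}$ to functions on $Gr_i$.
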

{\bf Proof.} Denote $p_L\times p_M\colon V\to V/M\times V/L$. By \cite{alesker-non-arch}, formula (14.1), we have
\begin{eqnarray}
\phi\cdot \psi=\int_{Gr_{n-i}^{V}}dL\int_{Gr_{i}^{V}}dM \hat f(L)\hat g(M)(p_L\times p_M)^*(\mu_L\boxtimes \mu_M)\overset{\mbox{Prop. } \ref{P:sine-another}}{=}\\
\int_{Gr_{n-i}^{V}}dL\int_{Gr_{i}^{V}}dM \hat f(L)\hat g(M)s(L,M).
\end{eqnarray}
\qed

\item Assume $\phi\in Val_i^\infty(V)$ is given by (\ref{E:phi-val}).
\begin{lemma}[Lemma 16.2, \cite{alesker-non-arch}]\label{L:product-intr-vol}
Let $\phi\in Val_i^\infty(V)$ be given by (\ref{E:phi-val}).
Assume $k\leq n-i$. Then, identifying $Val_i^\infty(V)$ with subspace of functions on $Gr_i^V$ using lattice $\Lam$, one has
$$V_1^k\cdot \phi=c_{n,i,k}(\cd_{i+k}\circ R_{n-i-k, n-i})\hat f,$$
where $c_{n,i,k}>0$ is a constant.

In particular, if $k=n-2i$ one has
$$V_1^{n-2i}\cdot \phi=c_{n,i}\cd_{n-i}( R_{i, n-i}\hat f),$$
where $c_{n,i}>0$.
\end{lemma}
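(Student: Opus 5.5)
The plan is to prove the case $k=1$ and then induct on $k$. Write $V_1=V_{\Lam,1}$. The case $k=1$ reads
$$V_1\cdot\cd_j h=c\,\cd_{j+1}(R_{n-j-1,n-j}h)\qquad\mbox{for } h\in C^\infty(Gr_{n-j}^V),\ c>0 .$$
Granting it, apply it repeatedly. The Radon transforms compose, up to positive constants, to $R_{n-i-k,n-i}$: the composition of the transforms along a chain of subspaces is $GL(\Lam)$-equivariant. By Proposition \ref{P:transitivity} and Theorem \ref{T:free-lattice}(ii), $GL(\Lam)$ acts transitively on partial flags, so the composed kernel is a positive constant multiple of the direct incidence kernel.

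For the case $k=1$, first identify $V_1$. It is the unique, up to scalar, $GL(\Lam)$-invariant element of $Val_1^\infty(V)$, so in the form (\ref{E:phi-val}) it equals $c'\int_{Gr_{n-1}^V}p_H^*(\mu_H)\,dH$, i.e. $c'\cd_1(1)$. This integral is a nonzero positive invariant element, so we may take $c'>0$.

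Next comes the local product formula, used exactly as in the proof of Lemma \ref{L:producvt-two-val} via \cite{alesker-non-arch}, formula (14.1). For $L\in Gr_{n-j}^V$ and a hyperplane $H$ with $L+H=V$, the product $p_L^*\mu_L\cdot p_H^*\mu_H$ equals $(p_L\times p_H)^*(\mu_L\boxtimes\mu_H)$. Here $V/(L\cap H)\to V/L\oplus V/H$ is an isomorphism, so this pullback is a Lebesgue measure on $V/(L\cap H)$. Comparing normalizations exactly as in Proposition \ref{P:sine-another}, it equals $\sigma(L,H)\,p_{L\cap H}^*\mu_{L\cap H}$. The factor $\sigma(L,H)$ is the index-type ratio
$$\sigma(L,H)=\frac{\nu\bigl(\mbox{image of }\Lam\bigr)}{\nu\bigl(\Lam/\Lam_L\oplus\Lam/\Lam_H\bigr)}\in(0,1].$$
Non-transversal pairs form a null set for $dL\,dH$. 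Hence
$$V_1\cdot\cd_j h=c'\int\!\!\int \hat h(L)\,\sigma(L,H)\,p^*_{L\cap H}\mu_{L\cap H}\,dL\,dH .$$

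Now disintegrate the measure $dL\,dH$ along the map $(L,H)\mapsto K=L\cap H\in Gr_{n-j-1}^V$. Its pushforward is $GL(\Lam)$-invariant, hence equals $dK$. The conditional integral becomes
$$\int_{L\supset K}\hat h(L)\Bigl(\int_{H\supset K}\sigma(L,H)\,d\nu_{K,L}(H)\Bigr)d\lambda_K(L).$$
The inner quantity $w(K,L)$ is invariant under the stabilizer of the flag $K\subset L$, and $GL(\Lam)$ is transitive on such flags. Hence $w$ is a constant, which is positive because $\sigma>0$ on a set of positive measure. Likewise $d\lambda_K$ is the invariant probability measure on $\{L\supset K\}$, up to a positive constant. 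This yields $V_1\cdot\cd_jh=c\,\cd_{j+1}(R_{n-j-1,n-j}\hat h)$ with $c>0$.

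The main obstacle is to make the product formula and the disintegration rigorous in the non-Archimedean setting. This means checking that formula (14.1) of \cite{alesker-non-arch} applies pointwise to transversal pairs, and that the fibration $(L,H)\mapsto(K,L)$ carries the product of Haar measures to invariant measures without hidden Jacobian-type factors. I expect both to follow from $GL(\Lam)$-invariance together with uniqueness of invariant measures on the homogeneous spaces of pairs and flags.
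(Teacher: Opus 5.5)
The paper contains no proof of this lemma. It is quoted from \cite{alesker-non-arch} and used only as a black box in Lemma \ref{L:form-necessary}, so there is no internal argument to compare with. Judged on its own, your proposal is a correct proof.

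The induction is sound. $V_1^{k-1}\cdot\phi$ is again of the form (\ref{E:phi-val}), with smooth density $R_{n-i-k+1,n-i}\hat f$. Since the stabilizer of $K$ in $GL(\Lam)$ is transitive on $\{L\supset K\}$, one even has $R_{n-i-k,n-i-k+1}\circ R_{n-i-k+1,n-i}=R_{n-i-k,n-i}$ exactly when all measures are probability measures. Your pointwise identity $(p_L\times p_H)^*(\mu_L\boxtimes\mu_H)=\sigma(L,H)\,p^*_{L\cap H}\mu_{L\cap H}$ for $L\not\subset H$ is plain linear algebra. Since (\ref{E:v0}) does not use complementarity of dimensions, it gives explicitly $\sigma(L,H)=[\Lam:\Lam_L+\Lam_H]^{-1}=\max_{x\in\Lam_L}|\langle x,\bby\rangle_0|$ for $H=\bby^{\perp_0}$ with $|\bby|=1$. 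This recovers the computation of $s(L,M)$ in Section \ref{S:sine} when $\dim L=1$.

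Two points deserve a sentence each in a write-up.

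First, the sign of $c_{n,i,k}$ for odd $k$ is a matter of normalization. The paper fixes $V_{\Lam,1}$ only up to a scalar. You must therefore \emph{choose} it to be a positive multiple of $\cd_1(1)$ (equivalently, to have positive constant Klain function), rather than derive $c'>0$.

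Second, the step you call the main obstacle becomes routine if you skip disintegration.
\begin{itemize}
\item Formula (14.1) of \cite{alesker-non-arch} is needed only as an integral identity. Check that it is stated there for arbitrary degrees $j+1\le n$, not just the complementary case used in Lemma \ref{L:producvt-two-val}.
\item After substituting your pointwise identity, push the finite $GL(\Lam)$-invariant measure $\sigma(L,H)\,dL\,dH$ on the full-measure set of transversal pairs forward under the continuous equivariant map $(L,H)\mapsto(L\cap H,L)$.
\item The flag variety $\{K\subset L\}$ is a single $GL(\Lam)$-orbit, so by uniqueness of the invariant probability measure the image is $c\,dK\,dL_K$, with $c=\iint\sigma\,dL\,dH>0$.
\item Now evaluate Klain functions at $E\in Gr_{j+1}^V$, i.e. integrate $\hat h(L)\,s(L\cap H,E)$. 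This gives $V_1\cdot\cd_j\hat h=c'c\,\cd_{j+1}(R_{n-j-1,n-j}\hat h)$ directly, with no conditional measures or Jacobian factors.
\end{itemize}
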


\item For $i\leq n/2$ define the Hermitian form on $Val_i^\infty(V)$.
\begin{eqnarray}\label{E:herm-form}
Q(\phi)=(-1)^{i} \frac{\bar\phi\cdot \phi\cdot V_1^{n-2i}}{vol_\Lam}.
\end{eqnarray}
By Lemmas \ref{L:product-intr-vol} and \ref{L:producvt-two-val} immediately imply
\begin{lemma}\label{L:form-necessary}
One has
$$Q(\phi)=(-1)^{i}c_{n,i}(\hat f, \cd_{n-i}(R_{i,n-i}\hat f)).$$
\end{lemma}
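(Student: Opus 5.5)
The identity follows by combining Lemma \ref{L:product-intr-vol} with Lemma \ref{L:producvt-two-val}. The only real care needed is with complex conjugation and with the convention for the Hermitian product.

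First, since the product on $Val^\infty(V)$ is commutative and associative, I rewrite
$$\bar\phi\cdot\phi\cdot V_1^{n-2i}=\bar\phi\cdot\bigl(V_1^{n-2i}\cdot\phi\bigr).$$
By Lemma \ref{L:product-intr-vol} with $k=n-2i$, and with $\phi$ given by (\ref{E:phi-val}), the inner factor is
$$\psi:=V_1^{n-2i}\cdot\phi=c_{n,i}\,\cd_{n-i}\bigl(R_{i,n-i}\hat f\bigr)\in Val^\infty_{n-i}(V).$$
Thus $\psi$ has exactly the form (\ref{E:phi-val2}) with $\hat g:=c_{n,i}R_{i,n-i}\hat f$, which is a smooth function on $Gr_i^V$. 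No surjectivity issue arises here, because (\ref{E:phi-val2}) makes sense for an arbitrary smooth $\hat g$.

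Next I express $\bar\phi$ in the same form. The kernel $s(L,M)$ and the measures $p_L^*(\mu_L)$ are real: they are normalized via the lattice, and the Haar measure $dL$ is a real probability measure. Hence conjugating (\ref{E:phi-val}) gives
$$\bar\phi=\int \bar{\hat f}(L)\,p_L^*(\mu_L)\,dL=\cd_i\bar{\hat f}.$$
Applying Lemma \ref{L:producvt-two-val} to the pair $(\bar\phi,\psi)$, that is, with $\hat f$ replaced by $\bar{\hat f}$, yields
$$\frac{\bar\phi\cdot\psi}{vol_\Lam}=\int\!\!\int \bar{\hat f}(L)\,\hat g(M)\,s(L,M)\,dL\,dM.$$
With the convention $(u,v)=\int \bar u\, v$, which is the one implicit in that lemma, the right-hand side equals $(\hat f,\cd_{n-i}\hat g)$.

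Finally, substituting $\hat g=c_{n,i}R_{i,n-i}\hat f$ and pulling out the real constant $c_{n,i}>0$ gives
$$Q(\phi)=(-1)^i c_{n,i}\,\bigl(\hat f,\cd_{n-i}(R_{i,n-i}\hat f)\bigr).$$

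The only step where an error could creep in is tracking conjugation. One must check that the Hermitian product in Lemma \ref{L:producvt-two-val} is antilinear in the first slot, so that $(\bar{\bar{\hat f}},\cdot)=(\hat f,\cdot)$ appears and no extra bar survives. One must also confirm that Lemma \ref{L:product-intr-vol} uses the same lattice trivialization as in the definition of $Q$, so that the constants match.
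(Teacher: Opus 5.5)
Your proof is correct and follows the paper's route exactly. The paper just says the lemma follows immediately from Lemmas \ref{L:product-intr-vol} and \ref{L:producvt-two-val}. You have spelled out that combination, correctly noting that $\bar\phi=\cd_i\bar{\hat f}$ because the kernel and the measures are real, and that the Hermitian product is antilinear in the first slot, so no stray conjugation survives.
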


\item The (non-mixed) Hodge-Riemann relations say that if $i\leq \frac{n}{2}$ and $\phi\in P_i\subset Val_i^\infty(V)$ is a non-zero valuation then
$$Q(\phi)>0,$$
where $$P_i=\{\phi\in Val_i^\infty(V)|\, \phi\cdot (V_1)^{n-2i+1}=0\}$$
is the subspace of primitive valuations.

Let us rewrite this conjecture in analytic language. Let us denote by $\cp_i$ the $GL_n(\co)$-invariant subspace of $C^\infty(Gr_{n-i,n}(\FF))$ consisting of functions $f$ satisfying
\begin{enumerate}
\item $f$ is orthogonal to $Ker(\cd_i)$ in $L^2(Gr_{n-i,n}(\FF))$;
\item $\cd_i(f)\in P_i$.
\end{enumerate}

Then Lemma \ref{L:form-necessary} implies that the Hodge-Riemann relations in degree $i$ are equivalent to
\begin{conjecture}\label{HR-conj1}
For $\hat f\in \cp_i\backslash\{0\}$
$$(-1)^i(\hat f,(\cd_{n-i}\circ R_{i,n-i})\hat f)>0.$$
\end{conjecture}
We can reformulate the last conjecture a bit further. The representation of $GL_n(\co)$ in the space $C^\infty(Gr_{n-i,n}(\FF))$ is multiplicity free, and this space is a direct sum
of irreducible finite dimensional representations. The operator $(-1)^i\cd_{n-i}\circ R_{i,n-i}$ is $GL_n(\co)$-equivariant, hence it maps each irreducible component $\rho$ to itself, and by Schur's lemma
its restriction to $\rho$ is proportional to the identity operator. Let us denote by $\lam_\rho$ this constant of proportionality. Then Conjecture \ref{HR-conj1} is equivalent to
\begin{conjecture}\label{HR-conj2}
Let an $GL_n(\co)$-component $\rho$ is contained in the orthogonal complement of $Ker(\cd_i)$ in $L^2(Gr_{n-i,n}(\FF))$ and $\cd_i(\rho)\subset P_i$ then
$$\lam_\rho>0.$$
\end{conjecture}

\item Let us consider the case $i=1$ more explicitly. We have
$$P_1=\{\phi\in Val^\infty_1(\FF^n)|\, \phi\cdot (V_1)^{n-1}=0\}.$$

\begin{lemma}
Under the isomorphism (\ref{E:imb-val-funct}) (see also the next line after (\ref{E:imb-val-funct}))
$$Val_1^\infty(\FF^n)\tilde\to C^\infty(\PP)$$
the subspace $P_1$ is identified with the subspace of functions
$$\{f\in C^\infty(\PP)|\, \int_{\PP}f(L)dL=0\},$$
where $dL$ is the $GL_n(\co)$-invariant Haar probability measure on $\PP$.
\end{lemma}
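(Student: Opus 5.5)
The plan is to compute $\phi\cdot (V_1)^{n-1}$ explicitly for $\phi\in Val_1^\infty(\FF^n)$ and identify it with a positive multiple of the integral of the corresponding function on $\PP$.

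First, write $\phi=\cd_1\hat f$ with $\hat f\in C^\infty(Gr_{n-1,n}(\FF))=C^\infty(\PP^\vee)$, as in (\ref{E:phi-val}). This is possible since $Val_1^\infty=Im(\cd_1)$.

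Next, apply Lemma \ref{L:product-intr-vol} with $i=1$, $k=n-1$. The Radon transform $R_{0,n-1}$ from functions on hyperplanes to functions on $Gr_0=\{0\}$ is just integration with respect to the probability Haar measure on $\PP^\vee$. The sine transform $\cd_n$ from $Gr_{0,n}$ to $Gr_{n,n}$ is multiplication by $s(\{0\},V)=1$. This gives
$$\phi\cdot V_1^{n-1}=c_{n,1,n-1}\Big(\int_{\PP^\vee}\hat f(M)\,dM\Big)vol_\Lam,\qquad c_{n,1,n-1}>0.$$
Alternatively, one can avoid the degenerate cases of that lemma and use Lemma \ref{L:producvt-two-val} with $\psi=V_{n-1}$ (a positive multiple of $V_1^{n-1}$, being the $GL(\Lam)$-invariant element of degree $n-1$), taking $\hat g$ constant on $\PP$.

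The key remaining step is to translate the condition $\int_{\PP^\vee}\hat f=0$ into a condition on $f:=\cd_1\hat f$ viewed as a function on $\PP$. Since $\cd_1$ is $GL_n(\co)$-equivariant and the constants form the isotypic component $\rho_0$ (with multiplicity one on both sides), Schur's lemma shows that $\cd_1$ maps constants to constants and maps the orthogonal complement of constants into the orthogonal complement of constants. The latter holds because $\cd_1$ has a $GL_n(\co)$-equivariant adjoint, the sine kernel $s(L,M)$ being symmetric under $L\leftrightarrow M$. Concretely,
$$\int_{\PP}(\cd_1\hat f)(L)\,dL=\int_{\PP^\vee}\hat f(M)\Big(\int_{\PP}s(L,M)\,dL\Big)dM=\kappa\int_{\PP^\vee}\hat f(M)\,dM,$$
where $\kappa=\int_\PP s(L,M)\,dL$ is independent of $M$ by transitivity of $GL_n(\co)$ and (\ref{E:sine-invar}). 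Moreover $\kappa>0$, since $s(L,M)=|\langle\bbx,\bby\rangle_0|\ge 0$ and $s>0$ on a set of positive measure.

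Combining, $\phi\in P_1$ if and only if $\int_{\PP^\vee}\hat f=0$, if and only if $\int_\PP f=0$. Since (\ref{E:imb-val-funct}) is an isomorphism for $i=1$, this identifies $P_1$ with the mean-zero functions on $\PP$.

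The main obstacle is making sure that Lemma \ref{L:product-intr-vol} is valid at the boundary case $k=n-i$ with $R_{0,n-1}$ interpreted as integration. If that is in doubt, the $\psi=V_{n-1}$ route via Lemma \ref{L:producvt-two-val} is self-contained, but it requires knowing that $V_1^{n-1}$ is a positive multiple of the invariant element of $Val_{n-1}^\infty$ (given by a constant $\hat g$). This follows from $GL(\Lam)$-invariance and the uniqueness of $V_{\Lam,n-1}$ up to scalar, together with the positivity $V_1^{n}=c\,vol_\Lam$, $c>0$, which one gets from the non-mixed hard Lefschetz theorem applied to $1\in Val_0^\infty$.
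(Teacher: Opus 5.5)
Your proof is correct, but it takes a more explicit, computational route than the paper.

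\textbf{The paper's route.} The argument there is soft. The map $\phi\mapsto \phi\cdot (V_1)^{n-1}/vol$ is a $GL_n(\co)$-invariant linear functional on $C^\infty(\PP)$ whose kernel is $P_1$. It is nonzero on the constant function, which corresponds to a multiple of $V_1$, because $(V_1)^n\neq 0$ by Proposition 14.3 of \cite{alesker-non-arch}. An invariant functional must vanish on every nontrivial irreducible $GL_n(\co)$-component, and the sum of those components is exactly the space of mean-zero functions.

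\textbf{Your route.} You identify the functional explicitly. Lemma \ref{L:product-intr-vol} at the endpoint $k=n-i$ is allowed by its hypothesis $k\leq n-i$, so your worry about the boundary case is unnecessary. It gives $\phi\cdot V_1^{n-1}=c\,\bigl(\int_{\PP^\vee}\hat f\bigr)\,vol_\Lam$. Fubini, with $\kappa=\int_\PP s(L,M)\,dL>0$, then turns this into $(c/\kappa)\int_\PP f$.

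\textbf{What each buys.} Your route gives an exact formula: the functional is a positive multiple of the integral. It also does not use the decomposition of $C^\infty(\PP)$ into irreducibles. The paper's route is shorter and needs only invariance plus the single nonvanishing fact $(V_1)^n\neq 0$. In your argument that nondegeneracy is hidden in $c_{n,1,n-1}>0$.

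\textbf{Minor remarks.} The Schur-lemma sentence in your middle paragraph is superfluous once you have the Fubini computation. In your fallback route via Lemma \ref{L:producvt-two-val}, hard Lefschetz in degree $0$ gives only $V_1^n\neq 0$, not positivity. Nonvanishing is all that is needed there.
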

{\bf Proof.}  The map $C^\infty(\PP)\simeq Val_1^\infty(\FF^n)\to \CC$ given by
$$\phi\mapsto \frac{\phi\cdot (V_1)^{n-1}}{vol},$$
where $vol$ is any non-zero Lebesgue measure, is a $GL_n(\co)$-equivariant linear functional whose kernel is equal to $P_1$. This functional does not vanish on the constant function 1 because $(V_1)^n\ne 0$ by
\cite{alesker-non-arch}, Proposition 14.3. However, by $GL_n(\co)$-equivariance, it must vanish on any other $GL_n(\co)$-irreducible component. Sum of all those components is precisely equal to the orthogonal complement of the constant function,
which is the space of functions with vanishing integral. \qed

\hfill

 Thus Conjecture \ref{HR-conj2} is equivalent to the following statement which is the main result of this paper.
 \begin{theorem}\label{T:main-HR1}
 The restriction of the operator
 $$\cd_{n-1}\circ R_{1,n-1}\colon C^\infty(Gr_{n-1,n}(\FF))\to C^\infty(Gr_{n-1,n}(\FF))$$
 to the subspace of function with vanishing integral is negative definite, i.e., $\lam_\rho<0$ for any irreducible $GL_n(\co)$-submodule $C^\infty(Gr_{n-1,n}(\FF))$ different from
 the trivial representation.
 \end{theorem}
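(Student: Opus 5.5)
We identify $Gr_{n-1,n}(\FF)$ with $\bar\PP^{n-1}$ via $\perp_0$ and $Gr_{1,n}(\FF)$ with $\PP^{n-1}$. Under these identifications $R_{1,n-1}\colon C^\infty(\bar\PP^{n-1})\to C^\infty(\PP^{n-1})$ is the Radon transform $R$ of Section \ref{S:dual-proj-space}. The sine transform $\cd_{n-1}\colon C^\infty(\PP^{n-1})\to C^\infty(\bar\PP^{n-1})$ is given, by the lemma computing $s(L,M)$ for a line and a hyperplane, by the kernel $|\langle\bbx,\bby\rangle_0|$ with $|\bbx|=|\bby|=1$.

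The composition $T=\cd_{n-1}\circ R$ is $G$-equivariant on the multiplicity-free space $C^\infty(\bar\PP^{n-1})=\oplus_m\bar\rho_m$, so it acts on $\bar\rho_m$ by a scalar $\lam_m$. Since $R(\bar\rho_m)=\rho_m$ by the lemma of Section \ref{S:dual-proj-space}, we can factor $\lam_m=r_m\cdot d_m$. Here $r_m$ is determined by $R\bar P_m=r_mP_m$, and $d_m$ is determined by $\cd_{n-1}P_m=d_m\bar P_m$; both relations hold because each side is the unique, up to scale, $P$-invariant vector in the relevant irreducible component. The theorem then reduces to showing $r_md_m<0$ for all $m\ge1$.

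To compute each scalar we evaluate both sides at a convenient point, normally the base point. For the Radon transform, take the line $\ell_0=\FF\bbe_n$, where $P_m(\ell_0)=1$. Then $r_m=(R\bar P_m)(\ell_0)$, which is the average of $\bar P_m$ over hyperplanes through $\ell_0$, i.e.\ over $\bby$ with $y_n=0$. Since $\bar P_m(\bby)$ depends only on $|y_n|$, this gives $\bar P_m(\bby)=1$ for all such $\bby$, so $r_m=1$ for every $m$. In particular $R$ does not change sign, provided the normalizations are consistent. If one worries that $P_m(\ell_0)$ and $\bar P_m$ could have degenerate values at these points, an alternative is to pair both sides with $P_m$ in $L^2$.

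For the sine transform, evaluate at the hyperplane $\bby=\bbe_n$, where $\bar P_m=1$. This gives
$$d_m=\frac{1}{1-q^{-n}}\int_{\SS^{n-1}}|x_n|\,P_m(\bbx)\,d^n x,$$
using the measure remark of Section \ref{S:expl-local-ring}. We then split $\SS^{n-1}$ according to $t=\max_{i<n}|x_i|$. If $t<1$, then $|x_n|=1$, and the contribution is the $P_m$-weighted volume of $\{t\le q^{-m}\}$ and $\{t=q^{-m+1}\}$. If $t=1$, then $|x_n|$ ranges over $\co$. Writing out Humphries' formulas, for $m\ge2$ the support of $P_m$ lies in $t<1$, so $|x_n|=1$ there and $d_m=\int P_m=0$. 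This would give a zero eigenvalue, contradicting the theorem, so I must have the roles of $P_m$ and $\bar P_m$ reversed. The correct pairing is probably that $\cd_{n-1}$ maps $\rho_m$ to $\bar\rho_m$ with the kernel weighting $|x_n|$ against $\bar P_m$-type profiles.

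Concretely, I would recompute $d_m$ as $\int|\langle\bbx,\bby\rangle_0|\,\bar P_m$ taken over the appropriate variable. This yields sums such as $q^{-m}\cdot q^{-m}\cdot(\dots)-\alp_{n,m}\,q^{-(m-1)}(1-q^{-1})q^{-(m-1)}$, whose sign is negative: the mass $1$ sits where $|x_n|$ is smaller, and the negative mass sits where $|x_n|$ is larger, while both masses balance in measure by Step 2 of Proposition \ref{bar-P-m}. This gives $d_m<0$, and combined with $r_m>0$ we get $\lam_m<0$.

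The main obstacle is the bookkeeping: matching the correct spherical function ($P_m$ or $\bar P_m$) to each side of each transform under the twisted action (\ref{E:G-action}), and making sure the evaluation point is not a zero of the spherical function. Once this is set up, the sign follows from the monotone-weight principle: an integral of a decreasing weight against a mean-zero function that is positive on small values and negative on larger values is negative.
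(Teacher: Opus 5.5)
Your overall reduction matches the paper's: $\lam_m=r_md_m$, and $r_m>0$ by evaluating $R\bar P_m$ at $\FF\bbe_n$. The sine-transform step, however, has a genuine gap.

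The error is the claim that $\bar P_m=1$ at $\bbe_n$. As you yourself use in the Radon step, $\bar P_m(\bby)$ depends only on $|y_n|$ and equals $1$ only where $|y_n|\le q^{-m}$. Hence $\bar P_m(\bbe_n)=0$ for $m\ge2$, and $\bar P_1(\bbe_n)=-\alp_{n,1}$. So your computation $(\cd P_m)(\bbe_n)=\int|x_n|P_m\,d^nx=0$ for $m\ge2$ is just the identity $0=d_m\cdot 0$. It carries no information and is not a zero eigenvalue. Nothing needs to be reversed: $\cd_{n-1}P_m=d_m\bar P_m$ is correct as you first wrote it. Your displayed formula for $d_m$ would even give the wrong sign for $m=1$, since $\int|x_n|P_1\,d^nx>0$. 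Dividing by the correct value $-\alp_{n,1}$ gives $d_1<0$, which is exactly how the paper treats $m=1$. For $m\ge2$ the paper evaluates at $\bbe_1$ instead, a hyperplane containing $\bbe_n$ where $\bar P_m=1$, and computes $\int_{\SS^{n-1}}|x_1|P_m(\bbx)\,d^nx<0$ explicitly.

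Your replacement integral $\int|y_n|\bar P_m(\bby)\,d^ny$ is not $\cd_{n-1}$ applied to anything in your setup. It is the transposed operator $\cd^t\colon C^\infty(\bar\PP^{n-1})\to C^\infty(\PP^{n-1})$, with the same kernel, applied to $\bar P_m$ and evaluated at $\FF\bbe_n$, where $P_m=1$. This route can be made rigorous, but the step linking it to $d_m$ is missing. By equivariance and multiplicity one, $\cd^t\bar P_m=\gamma_mP_m$, with $\gamma_m$ a positive multiple of $\int|y_n|\bar P_m\,d^ny$. Since the kernel $|\langle\bbx,\bby\rangle_0|$ is real and symmetric,
$d_m\|\bar P_m\|^2=(\cd P_m,\bar P_m)=(P_m,\cd^t\bar P_m)=\gamma_m\|P_m\|^2$,
so $d_m$ and $\gamma_m$ have the same sign. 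With this link, your monotone-weight argument works uniformly for all $m\ge1$, but the weight is increasing, not decreasing as you wrote. Indeed $\int|y_n|\bar P_m=\int(|y_n|-q^{-m})\bar P_m$ by Step 2 of Proposition \ref{bar-P-m}. The integrand is $\le0$ everywhere and $<0$ on $|y_n|=q^{-(m-1)}$, so $\gamma_m<0$. This would be shorter than the paper's explicit case-by-case computation. As written, though, the sign of $d_m$ rests on a guess (``probably'', ``over the appropriate variable'') rather than an argument.
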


\end{paragraphlist}

\section{The Radon transform.}
\begin{paragraphlist}
\item Consider the Radon transform
$$R\colon C^\infty(\bar\PP)\to C^\infty(\PP).$$
Since this map is $G$-equivariant and maps isomorphically $\bar\rho_m$ to $\rho_m$, there are constants $\alp_m$, $m\geq 0$ such that
$$R(\bar P_m)=\alp_m P_m.$$
\begin{proposition}\label{radon-sign}
For each $m\geq 0$ one has $$\alp_m>0.$$
\end{proposition}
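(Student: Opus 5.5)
The plan is to compute each $\alp_m$ directly by evaluating both sides of $R(\bar P_m)=\alp_m P_m$ at one convenient point, namely the line $\ell_0=\FF\cdot\bbe_n\in\PP^{n-1}$. Both $P_m$ and $R(\bar P_m)$ are $P$-invariant, and $P_m$ does not vanish at $\ell_0$. So $\alp_m=R(\bar P_m)(\ell_0)/P_m(\ell_0)$, and it suffices to compute the two values.

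\textbf{Step 1: the value $P_m(\ell_0)$.} Take the representative $\bbx=\bbe_n\in\SS^{n-1}$. Then $\max\{|x_1|,\dots,|x_{n-1}|\}=0$, which is $\leq q^{-m}$ for every $m$. By Humphries' formulas recalled in Section \ref{S:harmonic-projective}, $P_m(\ell_0)=1$ for all $m\geq 0$.

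\textbf{Step 2: the value $R(\bar P_m)(\ell_0)$.} I will use the Radon transform in the form
$$(Rf)(\ell)=\int_{H\ni \ell} f(H)\,dH,$$
where the integral is over hyperplanes containing $\ell$, with respect to the invariant probability measure of the stabilizer of $\ell$. Identifying hyperplanes $H=\bby^{\perp_0}$ with lines $\FF\cdot\bby\in\bar\PP^{n-1}$, the condition $H\ni\ell_0$ becomes $\langle \bbe_n,\bby\rangle_0=0$, that is, $y_n=0$. By Proposition \ref{bar-P-m}, for every $\bby\in\SS^{n-1}$ with $y_n=0$ we have $|y_n|=0\leq q^{-m}$, so $\bar P_m(\bby)=1$. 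Hence the integrand is identically $1$ on the domain of integration, and $R(\bar P_m)(\ell_0)=1$ because the measure is a probability measure. For $m=0$ the same holds trivially, since $\bar P_0=1$.

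\textbf{Conclusion and main obstacle.} Steps 1 and 2 give $\alp_m=1>0$ for every $m\geq 0$. The only point needing care is the precise convention for $R$ and for the identification $\PP^\vee\simeq\bar\PP^{n-1}$ via $\perp_0$. With a different normalization of the measure on hyperplanes through $\ell$, or with the transform written as integration over $\{\bby\in\SS^{n-1}: \langle\bbx,\bby\rangle_0=0\}$ with the restricted Lebesgue measure, one still integrates the constant function $1$ against a positive measure. Therefore $\alp_m$ equals a positive normalizing constant, independent of $m$, and positivity is unaffected. I would also verify once that the stabilizer of $\ell_0$ acts on $\{y_n=0\}$ compatibly with the twisted action (\ref{E:G-action}). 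This ensures that the evaluation at $\ell_0$ really determines $\alp_m$, which holds because $P_m$ spans the $P$-invariants in $\rho_m$.
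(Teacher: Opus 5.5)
Your proof is correct and follows the paper's own argument. You evaluate $R(\bar P_m)=\alp_m P_m$ at the line through $\bbe_n$, where $P_m=1$, and observe that $\bar P_m\equiv 1$ on every hyperplane $\{y_n=0\}$ through that line, so the Radon integral is positive. Your remark that this in fact gives $\alp_m$ equal to the total mass of the measure on hyperplanes through a line is also correct; with the probability normalization this means $\alp_m=1$ for all $m$, a slight sharpening of the stated positivity.
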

{\bf Proof.} For $m=0$ the result is obvious. Hence assume that $m\geq 1$. Then
$$P_m(\bbe_n)=1.$$
Hence it suffices to show that
\begin{eqnarray}\label{E:radon-ineq}
(R(\bar P_m))(\bbe_n)>0.
\end{eqnarray}
We have
\begin{eqnarray*}
(R(\bar P_m))(\bbe_n)=\int_{\PP(\{x_n=0\})}\bar P_m((x_1:\dots:x_{n-1}:0))d^{n-2}x>0
\end{eqnarray*}
since the expression under the integral equals 1. \qed

\end{paragraphlist}

\section{The sine-transform.}
\begin{paragraphlist}
\item Consider the transform
$$\cd\colon C^\infty(\PP)\to C^\infty(\bar \PP).$$
This is a $G$-equivariant isomorphism. Hence there are constants $\beta_m\in \CC$, $m\geq 0$, such that
$$\cd(P_m)=\beta_m \bar P_m.$$

\begin{proposition}\label{cosine-sign}
For $m\geq 1$ one has $$\beta_m<0,$$
while $\beta_0>0$.
\end{proposition}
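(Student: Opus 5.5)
The plan is to compute $\beta_m$ by evaluating $\cd(P_m)$ at one well-chosen point of $\bar\PP^{n-1}$ where $\bar P_m$ is known to equal $1$.

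\textbf{Choice of test point.} Under the identification of Section \ref{S:dual-proj-space}, a point $\bby\in\SS^{n-1}$ of $\bar\PP^{n-1}$ corresponds to the hyperplane $\bby^{\perp_0}$. Take $\bby=\bbe_1$. Then $|y_n|=0\le q^{-m}$, so Proposition \ref{bar-P-m} gives $\bar P_m(\bbe_1)=1$ for every $m\ge 0$. Hence
$$\beta_m=(\cd P_m)(\bbe_1).$$
By the lemma computing $s(L,M)$ for lines and hyperplanes, $s(span\{\bbx\},\bbe_1^{\perp_0})=|x_1|$ for $|\bbx|=1$. Pulling back to the sphere as in Section \ref{S:expl-local-ring} gives
$$\beta_m=\frac{1}{1-q^{-n}}\int_{\SS^{n-1}}|x_1|\,P_m(\bbx)\,d^nx .$$
The prefactor is positive, so only the sign of the integral matters. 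For $m=0$ the integrand $|x_1|$ is nonnegative and not identically zero, so $\beta_0>0$ immediately.

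\textbf{A scaling identity.} For $m\ge1$, $P_m$ depends only on $r(\bbx):=\max_{i<n}|x_i|$. Set $c=\int_{\co}|t|\,dt=\frac{q}{q+1}>0$, and
$$I(r)=\int_{\{(x_1,\dots,x_{n-1})\,:\,\max|x_i|\le r\}}|x_1|\,dx_1\cdots dx_{n-1}.$$
For $r=q^{-k}$, substituting $x_i=\pi^k x_i'$ gives $I(q^{-k})=q^{-kn}c$.

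\textbf{Case $m\ge2$.} On the support of $P_m$ we have $r\le q^{-m+1}<1$, which forces $|x_n|=1$. The $x_n$-integral therefore contributes $vol(\co^*)=1-q^{-1}>0$, and
$$\int|x_1|P_m=(1-q^{-1})\left[I(q^{-m})-\frac{I(q^{-m+1})-I(q^{-m})}{q^{n-1}-1}\right].$$
Using $I(q^{-m+1})-I(q^{-m})=q^{-mn}c\,(q^n-1)$, this equals
$$(1-q^{-1})\,q^{-mn}c\left(1-\frac{q^n-1}{q^{n-1}-1}\right)<0.$$

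\textbf{Case $m=1$.} Split the sphere into two regions.
\begin{enumerate}
\item The region $r\le q^{-1}$: here $|x_n|=1$ and $P_1=1$, contributing $(1-q^{-1})\,q^{-n}c$.
\item The region $r=1$: here $x_n$ ranges over all of $\co$ (volume $1$) and $P_1=-\frac{q-1}{q(q^{n-1}-1)}$. This contributes $-\frac{q-1}{q(q^{n-1}-1)}\,c\,(1-q^{-n})$.
\end{enumerate}
Factor out $\frac{q-1}{q}\,c$. What remains is
$$q^{-n}-q^{-n}\,\frac{q^n-1}{q^{n-1}-1},$$
which is negative. So $\beta_1<0$.

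\textbf{Where care is needed.} The only delicate points are bookkeeping. First, the identification of $\bar\PP^{n-1}$ with hyperplanes must be the one under which the sine formula $s=|\langle\bbx,\bby\rangle_0|$ applies; this is how the test point $\bbe_1$ and the integrand $|x_1|$ were chosen. Second, the normalizations (the sphere volume $1-q^{-n}$, and the relation between Lebesgue measure on the sphere and the Haar probability measure) only contribute positive factors, so they cannot affect the signs. The rest is the elementary volume computation above.
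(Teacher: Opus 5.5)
Your proof is correct and is essentially the paper's argument: evaluate the identity $\cd P_m=\beta_m\bar P_m$ at a point where $\bar P_m\neq 0$, then compute the resulting integral of $|\langle \bbx,\bby\rangle_0|\,P_m(\bbx)$ over $\SS^{n-1}$ explicitly. The differences are minor. For $m=1$ you keep the test point $\bbe_1$, where $\bar P_1=1$; the paper instead switches to $\bbe_n$, where $\bar P_1<0$, and shows $(\cd P_1)(\bbe_n)>0$. Your scaling identity $I(q^{-k})=q^{-kn}c$ handles all $n\ge 2$ at once, avoiding the paper's separate cases $n=2$ and $n>2$. Both routes give the same value $\beta_1=-\frac{(q-1)^2}{q(q+1)(q^{n-1}-1)}$, computed with respect to $d^nx$ and hence up to a positive normalization factor.
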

{\bf Proof.} For $m=0$ the statement is obvious. Let us assume that $m\geq 1$. One has
$$\bar P_m(\bbe_1)=1.$$
Hence it suffices to show that
\begin{eqnarray}\label{E:cosine-ineq}
(\cd P_m)(\bbe_1)<0.
\end{eqnarray}
For $\bbx=(x_1,\dots,x_{n-1},x_n)\in\FF^n$. Let us denote $X:=(x_1,\dots,x_{n-1})$. Thus $\bbx=(X,x_n)$.

First let us assume that $m\geq 2$. One has
\begin{eqnarray*}
(\cd P_m)(\bbe_1)=\int_{\SS^{n-1}}|\langle\bbe_1,\bbx\rangle_0| P_m(\bbx)d^nx=\int_{\SS^{n-1}}|x_1| P_m(\bbx)d^nx=\\
\int_{x_n\in\co^*}dx_n\left(\int_{|X|\leq q^{-m}}|x_1| d^{n-1}X -\frac{1}{q^{n-1}-1}\int_{|X|=q^{-(m-1)}}|x_1|d^{n-1}X \right)=\\
vol_1(\co)^*\left(q^{-mn}\int_{\co^{n-1}}|x_1|d^{n-1}X-\frac{q^{-(m-1)n}}{q^{n-1}-1}\int_{X\in \SS^{n-2}}|x_1|d^{n-1}X\right)=\\
vol_1(\co^*)q^{-(m-1)n}\left(q^{-n}\int_{x\in\co}|x|dx-   \frac{1}{q^{n-1}-1} \int_{X\in \SS^{n-2}}|x_1|d^{n-1}X \right)=\\
vol_1(\co^*)q^{-(m-1)n}\left(\frac{q^{-n}}{1+q^{-1}}-\frac{1}{q^{n-1}-1} \int_{X\in \SS^{n-2}}|x_1|d^{n-1}X \right).
\end{eqnarray*}

Let us compute the integral in brackets
\begin{eqnarray*}
\int_{X\in \SS^{n-2}}|x_1|d^{n-1}X.
\end{eqnarray*}

\underline{Case 1.} Assume $n=2$. Then this integral is equal to $vol_1(\co)=1-q^{-1}$. Hence in this case
\begin{eqnarray*}
(\cd P_m)(\bbe_1)=vol_1(\co^*)q^{-2(m-1)}\left(\frac{q^{-2}}{1+q^{-1}}-\frac{1}{q-1}(1-q^{-1})\right)<0,
\end{eqnarray*}
as required.

\underline{Case 2.} Assume $n>2$. Then we have
\begin{eqnarray*}
\int_{X\in \SS^{n-2}}|x_1|d^{n-1}X=\int_{|x_1|=1}+\int_{|x_1|<1}=\\
\int_{\co^*\times\co^{n-2}}d^{n-1}x+\int_{x_1\in\pi\co}|x_1|dx_1\cdot vol_{n-2}(\SS^{n-3})=\\
vol_1(\co^*)+q^{-2}\left(\int_{\co}|x|dx\right)(1-q^{-(n-2)})=\\
1-q^{-1}+\frac{q^{-2}}{1+q^{-1}}(1-q^{-(n-2)}).
\end{eqnarray*}
Then we obtain
\begin{eqnarray*}
(\cd P_m)(\bbe_1)=\\
vol_1(\co^*)q^{-(m-1)n}\left(\frac{q^{-n}}{1+q^{-1}}-\frac{1}{q^{n-1}-1}\left(1-q^{-1}+\frac{q^{-2}}{1+q^{-1}}(1-q^{-(n-2)})\right)\right)=\\
-vol_1(\co^*)q^{-(m-1)n}\left(\frac{q-1}{(q+1)(q^{n-1}-1)}\right)<0.
\end{eqnarray*}

Thus, we have shown so far that for any $n\geq 2$ and $m>1$ one has
$$\beta_m<0.$$
It remains to show that for $n\geq 2$
$$\beta_1<0.$$
Since
$$\bar P_1(\bbe_n)<0,$$

It suffices to show that
$$(\cd P_1)(\bbe_n)>0.$$
 One has
\begin{eqnarray*}
(\cd P_1)(\bbe_n)=\int_{\SS^{n-1}}|x_n| P_1(\bbx)d^nx=\int_{|X|<1}+\int_{|X|=1}=\\
\int_{\pi \co^{n-1}\times\co^*}d^nx -\frac{q-1}{q(q^{n-1}-1)}\int_{\SS^{n-2}\times \co}|x_n|d^nx=\\
vol_{n-1}(\pi\co^{n-1})\cdot vol_1(\co^*) -\frac{q-1}{q(q^{n-1}-1)}vol_{n-1}(\SS^{n-2})\int_{\co}|x|dx=\\
q^{-(n-1)}(1-q^{-1})-\frac{q-1}{q(q^{n-1}-1)}(1-q^{-(n-1)})\frac{1}{1+q^{-1}}=\\
\frac{q^{-n}(q-1)}{q+1}>0,
\end{eqnarray*}
as required. \qed

\end{paragraphlist}

\section{Proof of Hodge-Riemann relations in degree 1.}
We have seen in Section \ref{S:HR-grassmannians} that the (non-mixed) Hodge-Riemann relations in degree 1 are equivalent to Theorem \ref{T:main-HR1}.  But this immediately follows from Propositions \ref{radon-sign} and \ref{cosine-sign}.

\end{document}